\documentclass[12pt,a4paper]{article}
\usepackage[T1]{fontenc}
\usepackage[utf8]{inputenc}
\usepackage{lmodern}
\usepackage[body={16.6true cm, 26.4true cm}]{geometry}
\usepackage{amsmath,amssymb,amsthm}
\usepackage{microtype}

\usepackage{newtxtext,newtxmath}% Text and mathematics fonts.
\usepackage{indentfirst}
\usepackage{xcolor}
\usepackage[colorlinks=true,linkcolor=red,citecolor=blue]{hyperref}
\usepackage{mathtools}\mathtoolsset{showonlyrefs}

\numberwithin{equation}{section}
\newtheorem{theorem}{Theorem}[section]
\newtheorem{proposition}[theorem]{Proposition}
\newtheorem{lemma}[theorem]{Lemma}
\newtheorem{corollary}{Corollary}
\theoremstyle{remark}
\newtheorem{remark}[theorem]{Remark}
\newtheorem{example}[theorem]{Example}
\newcommand{\R}{\mathbb R}
\newcommand{\Z}{\mathbb Z}
\newcommand{\Om}{\Omega}
\newcommand{\cl}[1]{\overline{#1}}
\newcommand{\ind}{\mathbf 1}
\newcommand{\dd}{\,\mathrm d}
\newcommand{\ip}[2]{\langle #1,#2\rangle}
\DeclareMathOperator{\dist}{dist}
\DeclareMathOperator{\diam}{diam}
\DeclareMathOperator{\Lip}{Lip}

\DeclareMathOperator{\diver}{div}
\allowdisplaybreaks[2]

\title{Near-boundary rigidity and strict maximum modulus for Neumann eigenfunctions on convex planar domains\footnote{\footnotesize The work is supported by the National Natural Science Foundation of China (No.12001276, No.12326303, No.12431018).}}
\hypersetup{pdftitle={Near-Boundary Rigidity and Strict Maximum Modulus for Neumann Eigenfunctions on Convex Planar Domains}}

\author{Haiyun Deng$^{1}$, Xuyong Jiang$^{2}$, Xiaoping Yang$^{3}$\footnote{\footnotesize E-mail: hydeng@nau.edu.cn(H. Deng), jiangxy@cczu.edu.cn(X. Jiang), xpyang@nju.edu.cn(X. Yang)} \\[12pt]
	\emph {\scriptsize $^{1}$Department of Applied Mathematics, Nanjing Audit University, Nanjing, 211815, China;}\\
	\emph {\scriptsize $^{2}$Department of Mathematics, Changzhou University, Changzhou, 213164, China;}\\
	\emph {\scriptsize $^{3}$Department of Mathematics, Nanjing University, Nanjing, 210093, China} }
\date{}

\begin{document}
	\maketitle
	\begin{abstract}
		We develop a near-boundary rigidity theorem for arbitrary nonzero Neumann eigenfunctions on bounded convex planar domains. Precisely, if an eigenfunction $u$ with positive eigenvalue $\mu$ has an interior maximum-modulus point $p$ satisfying
		$\sqrt\mu\,\dist(p,\partial\Om)\le\pi/\sqrt3$, then $\Om$ must be a rectangle and $u$ must be a cosine of one coordinate, and  $\mu\ge4\mu_2(\Om)$, with the factor $4$ being sharp.
		This shows that a scale-invariant distance condition at just one maximum-modulus point determines the geometry of the entire domain
		and the explicit form of the eigenfunction.  We further prove that the maximum modulus of
		every second Neumann eigenfunction on a bounded convex planar domain
		is attained only on the boundary, without assuming boundary
		smoothness or eigenvalue simplicity. Moreover, we also prove strict boundary maximum modulus for higher Neumann eigenfunctions under suitable conditions, including all third Neumann eigenfunctions on arbitrary triangles.
	\end{abstract}
	
	\noindent\textbf{Keywords.} Hot spots; Neumann eigenfunctions; convex domains; gradient estimates; rigidity.
	
	\smallskip
	\noindent\textbf{2020 Mathematics Subject Classification.} 35B38, 35J25, 35P15.
	
	\section{Introduction}\label{sec:introduction}

	Let $\Om\subset\R^2$ be a bounded convex domain. We consider real-valued
	eigenfunctions of the Neumann problem
	\begin{equation}\label{eq:neumann}
		\begin{cases}
			-\Delta u=\mu u & \text{in }\Om,\\
			\partial_\nu u=0 & \text{on }\partial\Om,
		\end{cases}
	\end{equation}
	where $\nu$ is the outward unit normal, defined almost everywhere on
	$\partial\Om$. Since the boundary need not be smooth, the equation is
	understood in the weak sense: $u\in H^1(\Om)$ satisfies
	\begin{equation}\label{eq:weak}
		\int_\Om\nabla u\cdot\nabla\varphi\dd x
		=\mu\int_\Om u\varphi\dd x,
		\qquad \varphi\in H^1(\Om).
	\end{equation}
	The eigenvalues, counted with multiplicity, are denoted by
	\begin{equation}\label{eq:eigenvalue-order}
		0=\mu_1(\Om)<\mu_2(\Om)\le\mu_3(\Om)\le\cdots.
	\end{equation}
	We write $E_\mu(\Om)$ for the eigenspace corresponding to $\mu$, and
	$E_2(\Om)=E_{\mu_2(\Om)}(\Om)$.

	Rauch's hot spots conjecture \cite{Rauch} asks whether a second Neumann
	eigenfunction attains its maximum and minimum on the boundary. Its strict
	form asserts that neither extremal value is attained in the interior.
	The geometry of the domain is essential: counterexamples are known on
	multiply connected planar domains \cite{BurdzyWerner} and on convex
	domains in sufficiently high dimensions \cite{DeDiosPont}.

	{For planar domains, the hot spots conjecture has been proved for several
		classes, including convex domains with two axes of symmetry \cite{JerisonNadirashvili},
		lip domains \cite{AtarBurdzy}, nearly circular convex domains \cite{Miyamoto},
		and triangles \cite{JudgeMondal,JudgeMondalErratum}.
		Further results concern critical points and monotonicity on triangles
		\cite{ChenGuiYao}, and symmetry, multiplicity, and critical points on
		isosceles trapezoids and kites \cite{DengEtAlKites}.
		Methods used in these works include probabilistic coupling \cite{BanuelosBurdzy},
		domain deformation, and nodal set analysis.}
	Recently, OpenAI \cite{SmoothHotSpots2026} stated a theorem excluding
	interior critical points of every second Neumann eigenfunction
	on smooth bounded simply connected planar domains, without a simplicity
	assumption. Taking that result as an input,
	Burdzy \cite{BurdzyBoundary2026} derived the non-strict hot-spots property
	on bounded simply connected planar domains with compact Neumann resolvent
	and simple second eigenvalue, without assuming boundary smoothness.
     Atar and Burdzy \cite{AtarBurdzy2} construct a bounded
starlike Lipschitz domain in $\mathbb{R}^{3}$ with simple first positive Neumann eigenvalue for which the corresponding eigenfunction has its unique global maximum at an interior point.
    For quantitative comparisons between
	interior and boundary extrema, see \cite{DeDiosPontHsuTaylor,DengJiangYang2026,MarianoPanzoWang,SteinerbergerConstant}
	and the references therein.
    The arguments in the present paper do not rely on the methods of \cite{SmoothHotSpots2026} and \cite{BurdzyBoundary2026}. Moreover, our approach yields results beyond the scope of those works, including a near-boundary rigidity theorem for arbitrary positive Neumann eigenvalues and strict boundary maximum-modulus criteria for higher Neumann eigenfunctions.

	In this paper we first establish a rigidity theorem for arbitrary positive Neumann eigenvalues on bounded convex planar domains. The existence
	of just one interior maximum-modulus point suitably close to the boundary is enough to determine the geometry of the entire domain and the explicit form of the eigenfunction.

	\begin{theorem}\label{thm:rigidity}
		Let $u$ be a nonzero Neumann eigenfunction of a bounded convex domain
		$\Om\subset\R^2$, with eigenvalue $\mu>0$, and set
		$M=\max_{\cl\Om}|u|$. Suppose that $p\in\Om$ satisfies $|u(p)|=M$ and
		\begin{equation}\label{eq:threshold}
			\sqrt\mu\,\dist(p,\partial\Om)\le\frac\pi{\sqrt3}.
		\end{equation}
		Then $\Om$ is a rectangle and, for a unit vector $\tau$ parallel to one
		of its sides,
		\begin{equation}\label{eq:rigid-function}
			u(x)=\pm M\cos\bigl(\sqrt\mu\,\ip{x-p}{\tau}\bigr).
		\end{equation}
		The sides parallel to $\tau$ have length $m\pi/\sqrt\mu$ for some integer
		$m\ge2$. In particular,
		\begin{equation}\label{eq:spectral-factor}
			\mu\ge4\mu_2(\Om).
		\end{equation}
	\end{theorem}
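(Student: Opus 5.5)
The plan rests on the Payne-type gradient bound for Neumann eigenfunctions on convex planar domains,
\begin{equation}
P:=|\nabla u|^2+\mu u^2\le \mu M^2\qquad\text{in }\Om .
\end{equation}
To obtain it, I would first prove it on smooth convex approximating domains. There the convexity of the boundary gives $\partial_\nu P\le 0$. In the interior, a Bochner computation gives
\begin{equation}
|\nabla u|^2\Delta P\ge \tfrac12|\nabla P|^2-C\,|\nabla u|\,|\nabla P|
\end{equation}
wherever $\nabla u\neq0$. At an interior maximum of $P$ where $\nabla u=0$ we have $P=\mu u^2\le\mu M^2$ anyway. I would then pass to the limit using convergence of the eigenfunctions, so no boundary smoothness is needed. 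Writing $u=M\cos\theta$ locally, the bound says $|\nabla\theta|\le\sqrt\mu$. Integrating along segments from $p$, which is allowed because $\Om$ is convex, gives
\begin{equation}
u(x)\ge M\cos\bigl(\sqrt\mu\,|x-p|\bigr)\qquad\text{for }|x-p|\le\pi/\sqrt\mu ,
\end{equation}
where we normalize $u(p)=M$.

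Next, let $q\in\partial\Om$ be a nearest boundary point, with $d=|p-q|=\dist(p,\partial\Om)$. Convexity puts $\Om$ on one side of the supporting line $L$ at $q$, and $L$ is orthogonal to $p-q$. Set $\phi(x)=M\cos(\sqrt\mu\,\ip{x-p}{e})$ with $e=(q-p)/d$. This is the one-dimensional competitor whose level lines are parallel to $L$. The aim is to show $u\equiv\phi$ near the segment $[p,q]$. The threshold \eqref{eq:threshold} gives $\sqrt\mu\,d\le\pi/\sqrt3$. Hence $u\ge M\cos(\pi/\sqrt3)$ on the disc $\{|x-p|\le d\}$, and I believe this value is exactly what keeps $u$ positive there, so that $u$ has no zeros in a neighbourhood of $\{|x-p|\le d\}\cap\cl\Om$. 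I expect the specific constant $\pi/\sqrt3$ to enter through an explicit comparison at this stage. One candidate is comparing $u$ with $\phi$ on the lens between the disc $B(p,d)$ and $L$. Another is a Bochner/Hopf argument applied to $Q=\mu M^2-P\ge0$, which vanishes to second order at $p$.

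The decisive step, and the main obstacle, is to show $Q\equiv0$. The difficulty is that $p$ is a critical point of $u$, so the strong maximum principle for $P$ cannot be applied at $p$ directly. My first attempt would be to expand $u$ near $p$. The bound $P\le\mu M^2$ forces the Hessian at $p$ to satisfy $|\nabla^2u(p)|^2\le$ a bound whose equality case is rank one, namely $\nabla^2u(p)=-\mu M\,\tau\otimes\tau$ for some unit vector $\tau$. In the non-equality case, $u$ lies strictly above the radial comparison near $p$, and the boundary condition at $q$ then contradicts the closeness of $q$. In the equality case, $Q$ vanishes on a curve through $p$. I would then apply the strong maximum principle to $P$ on the connected set $\{\nabla u\ne0\}$, which reaches $\partial\Om$ by the previous paragraph, together with the Hopf lemma at boundary points where $\partial_\nu P\le0$. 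The conclusion would be that $P\equiv\mu M^2$.

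Given $P\equiv\mu M^2$, the Bochner identity forces $\nabla^2u$ to have rank one with $|\nabla u|$ constant along level lines. Thus the level lines are straight and parallel, and $u=M\cos(\sqrt\mu\,\ip{x-p}{\tau})$ by analyticity. The Neumann condition $\ip{\nu}{\tau}\sin(\cdot)=0$ a.e.\ on $\partial\Om$ then forces every boundary portion to be either parallel to $\tau$ or a segment of a line $\{\ip{x-p}{\tau}=k\pi/\sqrt\mu\}$. A convex domain with this property is a rectangle whose sides parallel to $\tau$ have length $m\pi/\sqrt\mu$. Because $p$ is interior with $u(p)=M$, and the cosine attains $\pm M$ only at the two ends when $m=1$, we get $m\ge2$. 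Finally, $\cos(\pi\ip{x}{\tau}/\ell)$, with $\ell=m\pi/\sqrt\mu$ the side length, is a Neumann eigenfunction with eigenvalue $\pi^2/\ell^2=\mu/m^2$. Hence $\mu_2(\Om)\le\mu/m^2\le\mu/4$, which is \eqref{eq:spectral-factor}.
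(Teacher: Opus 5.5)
\textbf{There is a genuine gap: the step that should carry the theorem is asserted, not proved.} Your gradient bound (modulo a point below) and your final step, from $P\equiv\mu M^2$ to a rectangle with $m\ge2$ and $\mu_2(\Om)\le\mu/4$, are sound in outline. The problem is that you never use the hypothesis $\sqrt\mu\,d\le\pi/\sqrt3$ in a verifiable way, and the statement is false without it. Take $u=J_0(j_{1,1}|x|)$ on the unit disk. Then $P\le\mu M^2$ holds, $P=\mu M^2$ at the interior maximum $0$, and $\nabla^2u(0)=-\tfrac{\mu M}{2}I$. This is exactly your ``non-equality case'', and the only thing excluding it is $\sqrt\mu\,d=j_{1,1}>\pi/\sqrt3$. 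Your sentence ``the boundary condition at $q$ then contradicts the closeness of $q$'' therefore carries the entire theorem, and you give no mechanism for it.

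The equality case is also unsupported. A rank-one Hessian $-\mu M\,\tau\otimes\tau$ only makes $Q=\mu M^2-P$ vanish to higher order at $p$; it does not make $Q$ vanish along a curve. Without a point of $\{\nabla u\ne0\}$, or a Hopf-admissible boundary point, where $P=\mu M^2$, the strong maximum principle for $P$ gives nothing, since $p$ is a critical point, as you note yourself.

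Your heuristics also point the wrong way. First, $\pi/\sqrt3>\pi/2$, so $M\cos(\pi/\sqrt3)<0$ and the barrier does not keep $u$ positive on $B(p,d)$. Second, your competitor $\phi$ oscillates in the normal direction $e$. In the rigid configuration (rectangle $(0,\omega_0)\times(-k\pi,l\pi)$, $u=\cos x_2$, $p=(d,0)$, $q=0$), $u$ is constant along $[q,p]$ and oscillates along $L$. So $u\equiv\phi$ near $[p,q]$ never happens.

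\textbf{The missing idea is a comparison in a narrow strip; this is where $\pi/\sqrt3$ comes from.} Normalize $\mu=M=u(p)=1$, $q=0$, $p=(d,0)$, $\Om\subset\{x_1>0\}$. Compare $u$ with $\cos x_2$ (not $\phi$) on $D=\Om\cap S$, where $S=(0,d)\times(-\pi,\pi)$. The cosine barrier gives $u\ge\cos x_2$ on the artificial sides $x_1=d$ and $x_2=\pm\pi$. Set $z=(\cos x_2-u)^+$ and $h=\cos(\pi x_1/(2d))\cos(x_2/2)$. Then $-\Delta h=(\pi^2/(4d^2)+\tfrac14)h$, $\partial_1h=0$ on $x_1=0$, and $h=0$ on the artificial sides. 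Testing the weak equation for $\cos x_2-u$ with $h\min\{1,z/\varepsilon\}$ yields
\[
\Bigl(\frac{\pi^2}{4d^2}-\frac34\Bigr)\int_D zh\dd x\le\int_{\partial\Om\cap S}(\text{boundary terms})\dd s .
\]
The boundary terms are shown to be nonpositive using the barrier together with the supporting-normal inequality $x_2n_2\ge d+(d-x_1)n_1$, which comes from $B(p,d)\subset\Om$. The coefficient is positive exactly when $d<\pi/\sqrt3$. The endpoint $d=\pi/\sqrt3$ needs the equality cases of the one-dimensional Poincar\'e inequalities plus $\nabla u(p)=0$.

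This gives $u\ge\cos x_2$ on $\cl D$, hence $u=1$ on the whole segment $[q,p]$. Unique continuation from this Cauchy data then yields $u=\cos x_2$, and only then does the rectangle argument begin.

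\textbf{A secondary gap in the gradient bound.} If $\mu$ is multiple, a general $u\in E_\mu(\Om)$ need not be a limit of exact eigenfunctions on smooth approximating domains, because the eigenvalue can split. So your ``pass to the limit'' needs an estimate that is stable under $\Delta w+\mu w=f$ with $f\to0$ in $C^1$, as in Lemma~\ref{lem:perturbed-gradient}.
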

	
	The rectangular examples in Remark {\ref{rem:sharp-factor}} show
	that the factor $4$ in {\eqref{eq:spectral-factor}} is sharp.
	As a consequence of Theorem {\ref{thm:rigidity}}, we obtain a strict boundary inequality for the maximum modulus of second Neumann eigenfunctions on arbitrary bounded convex planar domains, without assuming boundary smoothness or simplicity of $\mu_2(\Om)$.
	
	\begin{theorem}\label{thm:main}
		Let $u$ be a second Neumann eigenfunction of a bounded convex
		domain $\Om\subset\R^2$. Then
		\begin{equation}\label{eq:main}
			|u(x)|<\max_{\partial\Om}|u|
			\qquad\text{for every }x\in\Om.
		\end{equation}
		In particular, $\max_{\cl\Om}|u|=\max_{\partial\Om}|u|$.
	\end{theorem}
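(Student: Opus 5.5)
We argue by contradiction and reduce Theorem~\ref{thm:main} to Theorem~\ref{thm:rigidity}. Let $u\in E_2(\Om)$ be nonzero, set $\mu=\mu_2(\Om)>0$ and $M=\max_{\cl\Om}|u|$, and suppose some $p\in\Om$ has $|u(p)|\ge\max_{\partial\Om}|u|$. Since $\partial\Om\subset\cl\Om$, this forces $|u(p)|=M$, so $p$ is an interior maximum-modulus point. Note that $u$ is continuous up to the boundary on a convex domain, so $M$ is attained. If we can show that $\sqrt{\mu_2(\Om)}\,\dist(p,\partial\Om)\le\pi/\sqrt3$, then Theorem~\ref{thm:rigidity} applies and yields \eqref{eq:spectral-factor}, namely $\mu_2(\Om)\ge4\mu_2(\Om)$. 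This is impossible because $\mu_2(\Om)>0$. Hence $|u(x)|<\max_{\partial\Om}|u|$ for all $x\in\Om$, and the final equality follows immediately.

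The whole proof therefore rests on one geometric spectral bound: for any bounded convex planar $\Om$ and any $p\in\Om$,
\begin{equation}
\sqrt{\mu_2(\Om)}\,\dist(p,\partial\Om)\le\sqrt{\mu_2(\Om)}\,\rho(\Om)\le\frac{\pi}{\sqrt3},
\end{equation}
where $\rho(\Om)$ is the inradius. I would prove this by a variational upper bound on $\mu_2$ in terms of the inradius. Take a disk $B_\rho(c)\subset\Om$, choose a direction $e$, and use a test function depending only on $\ip{x-c}{e}$, for example $\cos$ or $\sin$ of a suitable multiple of that coordinate. Adjust it by a constant so that it is orthogonal to $1$, and insert it into the Rayleigh quotient. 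Combining this with the known sharp estimates for convex domains, namely the Payne--Weinberger-type and width-type bounds, should give the required inequality.

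A cleaner route uses the minimal width $w$. For convex planar domains one has $\rho\le w/2$, and more precisely $w\le3\rho$, with equality for the equilateral triangle. One also has an upper bound of the form $\mu_2(\Om)\le C/w^2$. Concretely, I would use $\mu_2\le\pi^2/\rho^2\cdot\frac13$, which should follow from a one-dimensional test function along the width direction, since the domain lies in a slab of width $w$. The constant $\pi/\sqrt3$ strongly suggests exactly this comparison, with the equilateral triangle as the extremal case in $w\le 3\rho$.

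The main obstacle is establishing the bound $\mu_2(\Om)\rho(\Om)^2\le\pi^2/3$ sharply enough for arbitrary convex domains. Slab-type test functions see the full width but not how the domain's cross-sections vary, so the Rayleigh quotient must be controlled uniformly. If the direct estimate fails, my first fallback is to rely on the earlier sections: the paper presumably proves such a bound in preparation for Theorem~\ref{thm:rigidity}, and I would invoke that inequality directly.
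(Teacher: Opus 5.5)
\textbf{Gap: the key inequality is false.} Your outer reduction matches the paper's. An interior maximum-modulus point $p$ with $\sqrt{\mu_2(\Om)}\,\dist(p,\partial\Om)\le\pi/\sqrt3$ lets Theorem~\ref{thm:rigidity} produce $\mu_2(\Om)\ge4\mu_2(\Om)$, which is a contradiction. The trouble is the inequality you rest everything on: $\sqrt{\mu_2(\Om)}\,\rho(\Om)\le\pi/\sqrt3$ for every bounded convex planar domain. The unit disk refutes it. There $\rho=1$ and $\mu_2=(j_{1,1}')^2$, with $j_{1,1}'\approx1.8412$ the first positive zero of $J_1'$, so $\sqrt{\mu_2}\,\rho\approx1.841>\pi/\sqrt3\approx1.814$.

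Because the inequality is false, no slab or width test function can establish it. The Payne--Weinberger estimate is a lower bound on $\mu_2$, so it points the wrong way. The best you get from Kr\"oger's bound $\sqrt{\mu_2}\,D_\Om\le2j_{0,1}$ together with $D_\Om\ge2\rho$ is $\mu_2\rho^2\le j_{0,1}^2\approx5.78$. Your fallback also fails: the paper proves no such bound. The inradius condition $\mu r_\Om^2\le\pi^2/3$ appears only as a \emph{hypothesis} in Theorem~\ref{thm:higher-maximum-modulus}, precisely because it is not automatic.

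\textbf{Missing idea: localization of critical points.} You must use what $p$ is, not merely that it lies in $\Om$. The point $p$ is an interior critical point of a second eigenfunction, and such points are forced close to the boundary. The paper uses three ingredients:
\begin{itemize}
\item Rohleder's localization theorem, $R_p=\max_{y\in\partial\Om}|y-p|>\frac{j_{1,1}}{2j_{0,1}}D_\Om$, which applies because $u\in H^2(\Om)$ on convex domains.
\item Kr\"oger's bound $\sqrt{\mu_2(\Om)}\,D_\Om\le2j_{0,1}$.
\item The chord inequality $D_\Om\ge R_p+\dist(p,\partial\Om)$, obtained by extending the segment from a farthest boundary point through $p$ to the opposite boundary.
\end{itemize}
Together these give $\sqrt{\mu_2(\Om)}\,\dist(p,\partial\Om)<2j_{0,1}-j_{1,1}\approx0.98<\pi/\sqrt3$. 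After that, your contradiction via Theorem~\ref{thm:rigidity} goes through unchanged.

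\textbf{Minor slip.} A point $p$ with $|u(p)|\ge\max_{\partial\Om}|u|$ need not satisfy $|u(p)|=M$. You should instead take an interior point where $M$ itself is attained. Such a point exists in either case: if $|u(p)|<M$, then $M>\max_{\partial\Om}|u|$, so $M$ is attained in $\Om$.
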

	For centrally symmetric planar domains, Kennedy \cite[Theorem 1.3]{Kennedy} proved that every second Neumann eigenfunction
	is odd under central inversion, assuming boundedness, simple
	connectedness, and a Lipschitz boundary. Combining this symmetry with
	Theorem \ref{thm:main} gives the following conclusion on centrally symmetric
	convex domains.

	\begin{corollary}\label{cor:central}
		Let $u$ be a second Neumann eigenfunction of a bounded convex
		domain $\Om\subset\R^2$ centrally symmetric about $c\in\R^2$.
		Then
		\begin{equation}\label{eq:central-oddness}
			u(2c-x)=-u(x)\qquad(x\in\cl\Om).
		\end{equation}
		In particular, its maximum and minimum are attained only on $\partial\Om$.
	\end{corollary}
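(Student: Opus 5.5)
The plan is to combine Theorem \ref{thm:main} with Kennedy's oddness result. First I check that Kennedy's hypotheses hold. A bounded convex planar domain is simply connected, and its boundary is Lipschitz. Hence \cite[Theorem 1.3]{Kennedy} applies to every second Neumann eigenfunction $u$ of $\Om$, and it gives $u(2c-x)=-u(x)$ for $x\in\Om$.

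To pass from $\Om$ to $\cl\Om$, I use continuity of $u$ up to the boundary. This holds for Neumann eigenfunctions on convex (hence Lipschitz) domains and is also implicit in the statement of Theorem \ref{thm:main}. The map $x\mapsto 2c-x$ sends $\cl\Om$ onto itself, so the identity extends by continuity to all of $\cl\Om$. This proves \eqref{eq:central-oddness}.

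For the extremal statement, oddness gives $\max_{\cl\Om}u=-\min_{\cl\Om}u=\max_{\cl\Om}|u|=:M$. Moreover $M>0$, because $u\not\equiv0$. Now suppose $u(x_0)=M$ or $u(x_0)=-M$ at some $x_0\in\Om$. Then $|u(x_0)|=M\ge\max_{\partial\Om}|u|$, which contradicts the strict inequality \eqref{eq:main} of Theorem \ref{thm:main}. So both the maximum and the minimum are attained only on $\partial\Om$.

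The only step needing care is the first one, matching Kennedy's hypotheses (Lipschitz boundary, simple connectedness) and the notion of eigenfunction used there with the weak formulation \eqref{eq:weak}. Both are standard for convex domains, so I do not expect a real obstacle.
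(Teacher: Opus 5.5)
Your proof is correct. It follows the route announced in the paper's introduction: Kennedy's oddness theorem combined with Theorem~\ref{thm:main}. Your second half coincides with the paper's, namely deducing that the maximum and minimum occur only on the boundary from $\max_{\cl\Om}u=-\min_{\cl\Om}u=\max_{\cl\Om}|u|$ together with \eqref{eq:main}.

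The difference is in the oddness step. The paper's written proof deliberately avoids citing Kennedy and instead reuses Rohleder's localization theorem, which was already needed for Theorem~\ref{thm:main}. Translate so that $c=0$ and consider the even part $g(x)=\tfrac12\bigl(u(x)+u(-x)\bigr)$. This is either zero or a second eigenfunction. If it is nonzero, it satisfies $\nabla g(0)=0$ at the center, which lies in $\Om$ by convexity. Then \eqref{eq:rohleder} yields $y\in\partial\Om$ with $|y|>\frac{j_{1,1}}{2j_{0,1}}\diam\Om$. But $-y\in\partial\Om$ forces $2|y|\le\diam\Om$, and $j_{1,1}/(2j_{0,1})>1/2$ gives a contradiction.

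Each route buys something different. Yours is shorter and outsources the antisymmetry to a nodal-domain result valid for all bounded simply connected Lipschitz domains. The paper's keeps the corollary dependent only on tools already invoked (Rohleder's theorem plus $H^2$ regularity on convex domains) and gives a quantitative two-line reason for oddness, at the cost of using convexity in that step. Your explicit extension of the identity from $\Om$ to $\cl\Om$ by continuity, via Proposition~\ref{prop:gradient}, is a detail the paper leaves implicit.
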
  

    The strict boundary maximum modulus property also extends to higher Neumann eigenfunctions under a suitable condition,
showing that it is not restricted to second Neumann eigenfunctions.
Writing $r_\Om = \max_{x\in\Om}\dist(x,\partial\Om)$ for the inradius of $\Om$, we obtain the following theorem.

	\begin{theorem}\label{thm:higher-maximum-modulus}
		Let $\Om\subset\R^2$ be a bounded convex domain that is not a rectangle.
		If $\mu>0$ is a Neumann eigenvalue satisfying
		\begin{equation}\label{eq:inradius-criterion}
			\mu r_\Om^2\le\frac{\pi^2}{3},
		\end{equation}
		then every $0\ne u\in E_\mu(\Om)$ satisfies
		\begin{equation}\label{eq:higher-maximum-modulus}
			|u(x)|<\max_{\partial\Om}|u|
			\qquad\text{for every }x\in\Om.
		\end{equation}
	\end{theorem}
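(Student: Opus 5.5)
Theorem~\ref{thm:higher-maximum-modulus} should follow directly from Theorem~\ref{thm:rigidity}; the argument is by contradiction. Let $0\ne u\in E_\mu(\Om)$ and put $M=\max_{\cl\Om}|u|$. We may assume $u$ extends continuously to $\cl\Om$. This holds for Neumann eigenfunctions on bounded convex (hence Lipschitz) planar domains, and the paper uses it already in defining $M$ in Theorem~\ref{thm:rigidity}. Then $M>0$ is attained somewhere on $\cl\Om$. Suppose, contrary to \eqref{eq:higher-maximum-modulus}, that some $x_0\in\Om$ has $|u(x_0)|\ge\max_{\partial\Om}|u|$.

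The first step is to show that $M$ is attained at an interior point. Since $\cl\Om=\Om\cup\partial\Om$, the supremum of $|u|$ over $\Om$ is at least $\max_{\partial\Om}|u|$. If $M$ were attained only on $\partial\Om$, then $|u|<M$ throughout $\Om$, which would force $|u(x_0)|<M=\max_{\partial\Om}|u|$ and contradict the choice of $x_0$. Hence there is $p\in\Om$ with $|u(p)|=M$.

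The second step is to check the threshold \eqref{eq:threshold} at $p$. By the definition of the inradius, $\dist(p,\partial\Om)\le r_\Om$. Combined with \eqref{eq:inradius-criterion}, this gives $\sqrt\mu\,\dist(p,\partial\Om)\le\sqrt\mu\, r_\Om\le\pi/\sqrt3$.

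Theorem~\ref{thm:rigidity} therefore applies to $u$ and $p$, and it forces $\Om$ to be a rectangle. This contradicts the hypothesis that $\Om$ is not a rectangle, so \eqref{eq:higher-maximum-modulus} holds.

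The only point that needs care is the continuity of $u$ up to the boundary, which is what makes $\max_{\partial\Om}|u|$ meaningful and lets $M$ be attained on $\cl\Om$. This is the same regularity already implicit in the statement of Theorem~\ref{thm:rigidity}, so I do not expect a real obstacle.
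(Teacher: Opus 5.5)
Your proposal is correct and follows essentially the same route as the paper. You argue by contradiction from an interior maximum-modulus point $p$, use $\dist(p,\partial\Om)\le r_\Om$ to verify \eqref{eq:threshold}, and invoke Theorem~\ref{thm:rigidity} to force a rectangle; you also spell out the small step from ``$M$ is not attained in $\Om$'' to the strict inequality \eqref{eq:higher-maximum-modulus}, which the paper leaves implicit, and continuity up to $\cl\Om$ is supplied by Proposition~\ref{prop:gradient}.
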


	Theorems \ref{thm:main} and \ref{thm:higher-maximum-modulus} control the larger of $\max_{\cl\Om}u$ and $-\min_{\cl\Om}u$, but the extremum with smaller absolute value may still be attained in the interior. In Example \ref{ex:sector-smaller-extremum}, a third Neumann eigenfunction satisfies \eqref{eq:inradius-criterion} and has
	$-\min_{\cl\Om}u<\max_{\cl\Om}u$, yet its minimum is attained
	in the interior.

	Combining Theorem \ref{thm:higher-maximum-modulus} with Kröger’s diameter bound \cite{Kroger},
	we obtain a geometric condition ensuring strict boundary maximum modulus
	for every eigenfunction corresponding to the first \(N\) positive Neumann eigenvalues.
	Writing $D_\Om=\diam\Om$, we obtain the following corollary. Here $j_{0,1}$ denotes the first positive zero of the Bessel function $J_0$.
	
	\begin{corollary}\label{cor:finite-higher-modes}
		Let $\Om\subset\R^2$ be a bounded convex domain that is not a rectangle,
		and let $N\ge1$ be an integer. If
		\begin{equation}\label{eq:diameter-inradius-criterion}
			\frac{D_\Om}{r_\Om}
			\ge\sqrt3\left(N-1+\frac{2j_{0,1}}{\pi}\right),
		\end{equation}
		then every eigenfunction associated with any of
		$\mu_2(\Om),\ldots,\mu_{N+1}(\Om)$ satisfies the strict
		maximum-modulus inequality \eqref{eq:higher-maximum-modulus}.
	\end{corollary}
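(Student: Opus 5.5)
The corollary will follow by combining Theorem \ref{thm:higher-maximum-modulus} with an upper bound on $\mu_{N+1}(\Om)$ in terms of the diameter. Since the eigenvalues are ordered increasingly, it is enough to show
\begin{equation}
	\mu_{N+1}(\Om)\,r_\Om^2\le\frac{\pi^2}{3}.
\end{equation}
Then every $\mu\in\{\mu_2(\Om),\dots,\mu_{N+1}(\Om)\}$ is positive and satisfies \eqref{eq:inradius-criterion}. As $\Om$ is not a rectangle, Theorem \ref{thm:higher-maximum-modulus} applies to every nonzero $u\in E_\mu(\Om)$ and gives \eqref{eq:higher-maximum-modulus}.

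For the eigenvalue bound I will use Kröger's estimate \cite{Kroger} for bounded convex planar domains, in the form
\begin{equation}
	\sqrt{\mu_{k+1}(\Om)}\,D_\Om\le \pi(k-1)+2j_{0,1},\qquad k\ge1 .
\end{equation}
The case $k=1$ is the classical bound $\mu_2\le 4j_{0,1}^2/D_\Om^2$. Each further eigenvalue adds at most $\pi/D_\Om$ to the square root, which is the one-dimensional spacing along a diameter segment.

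Taking $k=N$ gives $\sqrt{\mu_{N+1}}\le \pi\bigl(N-1+2j_{0,1}/\pi\bigr)/D_\Om$. Hypothesis \eqref{eq:diameter-inradius-criterion} says $D_\Om\ge\sqrt3\,r_\Om\bigl(N-1+2j_{0,1}/\pi\bigr)$. Substituting yields $\sqrt{\mu_{N+1}}\le \pi/(\sqrt3\,r_\Om)$, which is the required inequality.

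The only delicate point is the exact form of Kröger's inequality. I need to check the indexing, since $\mu_1=0$ here, and the constant $2j_{0,1}$ for the first positive eigenvalue. The right-hand side of \eqref{eq:diameter-inradius-criterion} is designed to match this form exactly. If Kröger's bound were stated with a different shift, the plan would stay the same and only the constant in the hypothesis would change.
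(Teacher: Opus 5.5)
Your proposal is correct and follows the paper's proof. The paper uses Kr\"oger's bound in the form $\mu_k(\Om)D_\Om^2\le\bigl(2j_{0,1}+(k-2)\pi\bigr)^2$ for $k\ge2$, which is exactly your indexing, and combines it with hypothesis \eqref{eq:diameter-inradius-criterion} and monotonicity of the eigenvalues to get $\mu_k(\Om)r_\Om^2\le\pi^2/3$ for $2\le k\le N+1$ before invoking Theorem \ref{thm:higher-maximum-modulus} on each eigenspace.
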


	For triangles, Theorem \ref{thm:higher-maximum-modulus} applies to every third Neumann eigenfunction. To the best of our knowledge,
	the strict boundary maximum modulus property for third Neumann eigenfunctions
	on arbitrary triangles has not previously been established.
	
	\begin{theorem}\label{thm:triangle-third-mode}
		Let $u$ be a third Neumann eigenfunction of a triangle $T\subset\R^2$. Then
		\begin{equation}\label{eq:triangle-third-maximum-modulus}
			|u(x)|<\max_{\partial T}|u|
			\qquad\text{for every }x\in T.
		\end{equation}
		Moreover, let $S=|T|$, $P$ the perimeter of $T$, and $D=\diam T$
		the length of its longest side. Set
		$\rho=2S/(P-D)$. If $T$ is acute and
		\begin{equation}\label{eq:triangle-vertex-criterion}
			\mu_3(T)\rho^2\le\frac{\pi^2}{3},
		\end{equation}
		then $|u|$ attains its maximum only at the vertices of $T$.
	\end{theorem}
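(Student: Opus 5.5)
The plan is to derive both parts of Theorem \ref{thm:triangle-third-mode} from Theorem \ref{thm:higher-maximum-modulus}. First note that a triangle is never a rectangle. So for the first statement it suffices to verify the inradius criterion \eqref{eq:inradius-criterion} with $\mu=\mu_3(T)$, that is, $\mu_3(T)r_T^2\le\pi^2/3$, where $r_T=2S/P$ is the inradius.

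To bound $\mu_3(T)$ from above, I will use the min--max principle with a two-dimensional trial space of functions orthogonal to constants. A natural choice is to place the longest side $D$ on the $x$-axis and take functions depending only on the coordinate along that side, or on the distance $y$ from it, such as $\cos(\pi y/h)$ and $\cos(2\pi y/h)$, where $h=2S/D$ is the height to the longest side. The aim is a bound of the form $\mu_3(T)\le c\,\pi^2/h^2$ with an explicit constant. Since $r_T=2S/P$ and $P\ge 2D$ (the triangle inequality gives $P-D> D$), we have $r_T\le h/2$ and hence $\mu_3 r_T^2\le c\pi^2/4$. So the requirement is $c\le 4/3$.

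I expect this to be the main obstacle. The crude cosine trial functions in $y$ on a triangle do not give exactly $\pi^2/h^2$, because the weight, the chord length $\ell(y)=D(1-y/h)$, is linear. The Rayleigh quotients therefore have to be computed via one-dimensional Sturm--Liouville comparison: the problem $-(\ell f')'=\lambda\ell f$ on $(0,h)$, with the Neumann condition at $y=0$, is a Bessel problem whose eigenvalues are $j_{1,k}^2/h^2$. This gives $\mu_3\le j_{1,2}^2/h^2$ if the third eigenvalue is controlled by the second radial mode, and then $\mu_3 r_T^2\le j_{1,2}^2/4\approx 12.4$. That exceeds $\pi^2/3\approx3.29$, so this route fails. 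The approach has to be refined: use $P-D$ in place of $2D$, or use trial functions along the longest side, or rather exploit Kröger-type bounds as in Corollary \ref{cor:finite-higher-modes} with $N=2$. The latter requires $D/r\ge\sqrt3(1+2j_{0,1}/\pi)\approx 4.38$. For triangles, $D/r=DP/(2S)\ge$ the value for the equilateral triangle, which is $2\sqrt3\cdot$... Here I must check numerically: for the equilateral triangle $D/r=2\sqrt3\approx3.46$, which is not enough. So I would instead use known explicit eigenvalue bounds on triangles, such as Laugesen--Siudeja type upper bounds $\mu_3(T)\le$ a constant times $P^2/S$... I would try first to combine $\mu_2+\mu_3$ type upper bounds with the inradius, checking the equilateral case directly, where $\mu_3=\mu_2=16\pi^2/(9a^2)$ and $r=a/(2\sqrt3)$ give $\mu r^2=4\pi^2/27<\pi^2/3$. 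For degenerate triangles the Kröger bound above suffices. In between I would interpolate using a monotonicity or continuity argument of $\mu_3 r^2$ over the compact family of triangle shapes.

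For the second part, with $T$ acute and \eqref{eq:triangle-vertex-criterion}, the plan is to apply the rigidity mechanism locally. By the first part the maximum of $|u|$ lies on $\partial T$. Suppose it is attained at a point $q$ in the relative interior of a side. Then the corresponding reflection or odd-extension argument underlying Theorem \ref{thm:rigidity} applies with $\dist$ replaced by a distance measured within the doubled domain. The quantity $\rho=2S/(P-D)$ is exactly the inradius-like quantity of the kite obtained by reflecting $T$ across a side, which is convex when $T$ is acute and the reflection is across a non-longest side. Choose the reflection that keeps convexity. The reflected eigenfunction is a Neumann eigenfunction of a convex non-rectangular domain having $q$ as an interior maximum-modulus point with $\mu\rho^2\le\pi^2/3$. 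This contradicts Theorem \ref{thm:rigidity}, so the maximum can only occur at the vertices.
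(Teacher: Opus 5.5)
\paragraph{The first part has a genuine gap.} Everything rests on the uniform estimate $\mu_3(T)r_T^2\le\pi^2/3$ for \emph{every} triangle, and your proposal never proves it. By your own computations, the trial functions in $y$ only give $\mu_3r_T^2<j_{1,2}^2/4\approx12.3$, and Kr\"oger's bound alone fails near the equilateral shape. Your fallback is to check the equilateral and degenerate ends and then ``interpolate by monotonicity or continuity over the compact family of shapes''. That is not an argument: continuity of $\mu_3r^2$ gives no quantitative control on the intermediate shapes, and no monotonicity is available. (Also, a bound $\mu_3\le cP^2/S$ is dimensionally wrong.)

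The missing idea is to reduce the shape dependence to the single parameter $t=r_T^2/D^2$. Set $Q=a^2+b^2+c^2$. The elementary inequality $Q/P^2+2t\le\frac12$ follows from the substitution $x=s-a$, $y=s-b$, $z=s-c$ with $a=D$, together with Heron's formula $r_T^2=xyz/s$. Multiply the Laugesen--Siudeja bound $\mu_2+\mu_3\le2\pi^2Q/(9S^2)$ by $r_T^2=4S^2/P^2$ and use this inequality to get $(\mu_2+\mu_3)r_T^2\le\frac{4\pi^2}{9}-\frac{16\pi^2}{9}t$. Subtracting the lower bound $\mu_2D^2>j_{1,1}^2$ gives an upper bound for $\mu_3r_T^2$ that decreases linearly in $t$. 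Kr\"oger's bound $\mu_3D^2\le(2j_{0,1}+\pi)^2$ gives one that increases linearly in $t$. The smaller of the two never exceeds their crossing value, about $2.91<\pi^2/3$. Some lower bound on $\mu_2$ is essential: with only $\mu_3<\mu_2+\mu_3$ the crossing value is about $3.43>\pi^2/3$. (Payne--Weinberger's $\mu_2D^2\ge\pi^2$ would also suffice, giving about $3.06$.)

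\paragraph{The second part follows the paper's route but needs corrections.} The route is the same: reflect across a side and apply Theorem~\ref{thm:rigidity} to the resulting kite.
\begin{itemize}
\item The reflection is not yours to choose. It must be across the side $e$ containing $q$, since only then is $q$ an interior point of $K_e=\operatorname{int}(\cl T\cup R_e(\cl T))$.
\item The extension must be even. The odd extension jumps across $e$ wherever $u\ne0$, in particular at $q$.
\item Acuteness makes $K_e$ convex for every side. Its angles are $2\alpha,2\beta,\gamma,\gamma$, where $\gamma$ is the angle of $T$ opposite $e$, and it is not a rectangle since $\gamma<\pi/2$. Your suggestion that convexity requires a non-longest side is backwards: reflection across the longest side gives a convex kite for every triangle.
\item What you actually need is $\sqrt{\mu_3}\,\dist(q,\partial K_e)\le\pi/\sqrt3$. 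With $\ell=|e|$ and $\sigma$ the distance from $q$ to one endpoint of $e$, this follows from
\[
\dist(q,\partial K_e)=\min\{\sigma\sin\alpha,(\ell-\sigma)\sin\beta\}\le\frac{\ell\sin\alpha\sin\beta}{\sin\alpha+\sin\beta}=\frac{2S}{P-\ell}\le\frac{2S}{P-D}=\rho .
\]
So $\rho$ is the inradius of the kite across the longest side, and it dominates the inradius of every $K_e$.
\end{itemize}
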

	
	Judge and Mondal \cite{JudgeMondalCriticalSets} studied critical sets of Laplace eigenfunctions.
	They proved that a second Neumann
	eigenfunction on a bounded simply connected polygon has only finitely many
	critical points in its closure unless the polygon is a rectangle and the
	eigenfunction is a cosine of one coordinate. The following theorem gives a
	related classification for the interior global maximum-modulus set, for arbitrary
	positive Neumann eigenvalues on bounded convex planar domains.
	
	\begin{corollary}
		\label{thm:maximum-modulus-set}
		Let $u$ be a nonzero Neumann eigenfunction of a bounded
		convex domain $\Om\subset\R^2$, with eigenvalue $\mu>0$. Set
		\begin{equation}\label{eq:interior-maximum-modulus-set}
			\mathcal M_{\mathrm{int}}(u)=\{x\in\Om:|u(x)|= \max_{\cl\Om}|u| \}.
		\end{equation}
		{The set $\mathcal M_{\mathrm{int}}(u)$ is infinite if and only if,}
		after a rigid motion, there exist $W,L>0$, $C\in\R\setminus\{0\}$,
		and an integer $m\ge2$ such that
		\begin{equation}\label{eq:maximum-set-rectangular-mode}
			\Om=(0,W)\times(0,L),\qquad
			u(x_1,x_2)=C\cos\frac{m\pi x_2}{L},\qquad
			\mu=\frac{m^2\pi^2}{L^2}.
		\end{equation}
		In every other case, $\mathcal M_{\mathrm{int}}(u)$ is finite, possibly
		empty, and
		\begin{equation}\label{eq:maximum-set-distance-gap}
			\mathcal M_{\mathrm{int}}(u)
			\subset
			\left\{x\in\Om:\dist(x,\partial\Om)
			\ge\frac{\pi}{\sqrt{3\mu}}\right\}.
		\end{equation}
	\end{corollary}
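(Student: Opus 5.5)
The plan is to derive the corollary from Theorem~\ref{thm:rigidity}, splitting according to whether some interior maximum-modulus point satisfies \eqref{eq:threshold}.

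\emph{The ``if'' direction.} Assume \eqref{eq:maximum-set-rectangular-mode}. Then $\max_{\cl\Om}|u|=|C|$. It is attained exactly on the horizontal lines $x_2=kL/m$, $k=0,\dots,m$. Since $m\ge2$, the line $x_2=L/m$ lies strictly inside $(0,L)$, so its intersection with $\Om$ is an open segment of length $W$. That segment is an infinite subset of $\mathcal M_{\mathrm{int}}(u)$.

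\emph{Dichotomy.} Let $u$ be arbitrary with $\mathcal M_{\mathrm{int}}(u)\ne\emptyset$.

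\textbf{Case 1:} some $p\in\mathcal M_{\mathrm{int}}(u)$ satisfies $\sqrt\mu\,\dist(p,\partial\Om)\le\pi/\sqrt3$. Then Theorem~\ref{thm:rigidity} applies. It gives that $\Om$ is a rectangle and that $u=\pm M\cos(\sqrt\mu\,\ip{x-p}{\tau})$, with $\tau$ parallel to a pair of sides of length $m\pi/\sqrt\mu$, $m\ge2$. Apply a rigid motion taking $\Om$ to $(0,W)\times(0,L)$ with $\tau=e_2$, so $L=m\pi/\sqrt\mu$ and $\mu=m^2\pi^2/L^2$.

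We then need the phase to vanish at $x_2=0$. The Neumann condition on the side $x_2=0$ forces $\sin(\sqrt\mu\,(0-p_2))=0$. Hence $\sqrt\mu\, p_2\in\pi\Z$, and
\[
\cos\bigl(\sqrt\mu(x_2-p_2)\bigr)=\pm\cos\frac{m\pi x_2}{L}.
\]
Absorbing the sign gives \eqref{eq:maximum-set-rectangular-mode} with $C=\pm M\ne0$.

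\textbf{Case 2:} no such $p$ exists. Then every point of $\mathcal M_{\mathrm{int}}(u)$ satisfies $\dist(x,\partial\Om)>\pi/\sqrt{3\mu}$, which gives \eqref{eq:maximum-set-distance-gap}. It remains to show that $\mathcal M_{\mathrm{int}}(u)$ is finite in this case, since that yields both the ``only if'' direction and the last assertion.

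\emph{Finiteness in Case 2.} Set $K=\{x:\dist(x,\partial\Om)\ge\pi/\sqrt{3\mu}\}$. By convexity $K$ is a compact subset of $\Om$, and $\mathcal M_{\mathrm{int}}(u)\subset K$ is closed, hence compact. Suppose it were infinite.

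\begin{itemize}
\item It would have an accumulation point $q\in K\subset\Om$. Since $u$ is real-analytic in $\Om$, the analytic set $\{u^2=M^2\}$ would contain an analytic arc through $q$.
\item Every point of this set is a critical point of $u$, so $u\equiv\pm M$ along the arc, with $\nabla u=0$ there.
\item Continue the arc. The level set $\{u=\pm M\}$ in $\Om$ is a locally finite union of analytic arcs, and each arc consists of maximum-modulus points. Following a maximal arc, it cannot terminate at an interior point of $\Om$, because the set is analytic with nonempty one-dimensional structure. It cannot stay inside the compact set $K$ forever either, since it is a closed analytic curve-set and unbounded arc length would accumulate.
\item So it must leave $K$. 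That produces a point $p\in\Om\setminus K$ with $|u(p)|=M$, i.e.\ a point satisfying \eqref{eq:threshold}, contradicting the Case~2 assumption.
\end{itemize}

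The main obstacle is making this continuation rigorous. The delicate part is excluding a closed loop of maximum-modulus points contained in $K$.

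For a loop, I would argue as follows. If $u\equiv M$ on a closed curve $\gamma$ bounding a region $D$, then $u-M$ vanishes on $\partial D$ and $u\le M$ in $D$, so $-\Delta u=\mu u$ with $u$ near $M>0$. If $u>0$ throughout $D$, the maximum principle fails: $u$ is superharmonic and equals its maximum on $\partial D$, forcing $u\equiv M$, which is impossible since $\mu>0$. So $u$ must change sign inside $D$. Alternatively, use that $\nabla u=0$ on $\gamma$ together with $u=M$ on $\gamma$. Then $u$ satisfies Cauchy data of a Dirichlet--Neumann overdetermined problem on $D$: $u-M$ solves $-\Delta w=\mu w+\mu M$ with $w=\partial_\nu w=0$ on $\gamma$. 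By unique continuation (Holmgren/Cauchy--Kovalevskaya for analytic $\gamma$), $w$ is determined near $\gamma$ and coincides with the radially symmetric-type solution only if the curvature is zero. I would try to show directly that $w\equiv0$ near $\gamma$, contradicting $\Delta u=-\mu M\ne0$.

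In fact the same observation disposes of arcs altogether. Any analytic arc on which $u=M$ and $\nabla u=0$ gives vanishing Cauchy data for $w=u-M$, while $-\Delta w=\mu M+\mu w$. Writing the equation in Fermi coordinates along the arc, the second normal derivative must equal $-\mu M\ne0$. So $u=M-\tfrac{\mu M}{2}s^2+\dots$, where $s$ is the distance from the arc. This is consistent with a maximum; it is exactly the cosine profile, and it is not contradictory. Hence the straightening argument is genuinely needed.

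I therefore expect to invoke Cauchy--Kovalevskaya uniqueness to conclude that $u$ equals the one-dimensional cosine profile near a straight arc. For a curved arc, the expansion gives $\partial_s^3 w\neq0$ terms incompatible with $|u|\le M$, which forces curvature zero. Analytic continuation then yields $u=\pm M\cos(\sqrt\mu\,\ip{x-q}{\tau})$ on all of $\Om$, and this function has maximum-modulus points at distance $\pi/\sqrt\mu<\pi/\sqrt{3\mu}\cdot\sqrt3$ from any point on the line. Checking that one of these lies in $\Om\setminus K$ then contradicts Case 2.
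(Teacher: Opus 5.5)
\textbf{There is a genuine gap in the finiteness step.}

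Your ``if'' direction and Case~1 are fine and match the paper: Theorem~\ref{thm:rigidity}, then the Neumann condition on the side $x_2=0$ fixes the phase. The problem is exactly where you suspected. You never exclude a simple closed curve $\Gamma\subset K$ on which $u^2=M^2$ and $\nabla u=0$, and none of your three suggestions does it.
\begin{itemize}
\item The minimum-principle argument only shows that $u$ changes sign inside the enclosed region. That is not a contradiction.
\item The Cauchy--Kovalevskaya route cannot close the gap, because the obstruction is not local. In Fermi coordinates $(t,s)$ along an arc of signed curvature $\kappa$, one gets $\partial_s^2w=-\mu M$ and $\partial_s^3w=-\kappa\mu M$ on $s=0$. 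Hence $u=M-\tfrac{\mu M}{2}s^2-\tfrac{\kappa\mu M}{6}s^3+O(s^4)$. The cubic term is dominated by the quadratic one, so it is fully compatible with $u\le M$.
\item Curved arcs of strict maxima do exist locally. Take a radial solution $v=aJ_0(\sqrt\mu r)+bY_0(\sqrt\mu r)$ with $v(r_0)=M$ and $v'(r_0)=0$ for some $r_0>0$. In a thin annulus, the whole circle $r=r_0$ is its strict maximum set. So ``forcing curvature zero'' is false, and the straight-line/cosine conclusion does not follow.
\end{itemize}
A minor slip: $\pi/\sqrt\mu<\pi/\sqrt{3\mu}\cdot\sqrt3$ is actually an equality. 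In the straight case it is simpler to note that the whole chord through $q$ lies in the maximum set and reaches $\partial\Om$.

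\textbf{What is missing is a global identity on the region $G$ enclosed by $\Gamma$.} Convexity of $\Om$ gives $G\subset\Om$. The paper uses the planar Rellich--Pohozaev field $X=(x\cdot\nabla u)\nabla u-\tfrac12|\nabla u|^2x+\tfrac\mu2u^2x$. It satisfies $\diver X=(x\cdot\nabla u)(\Delta u+\mu u)+\mu u^2=\mu u^2$, and $X=\tfrac{\mu M^2}{2}x$ on $\Gamma$. The divergence theorem then gives $\mu\int_Gu^2\dd x=\mu M^2|G|$. Since $u^2\le M^2$, this forces $u^2\equiv M^2$ on $G$, which contradicts $\Delta u=-\mu u\ne0$.

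\textbf{Your ``continuation'' step also needs a real argument.} The appeal to ``analytic sets with nonempty one-dimensional structure'' does not prove that arcs cannot terminate. The paper gets the structure directly.
\begin{itemize}
\item At an accumulation point $p$ with $u(p)=M$, one has $\nabla u(p)=0$ and $\Delta u(p)=-\mu M\ne0$. After a rotation, $\partial_{22}u(p)\ne0$.
\item By the analytic implicit function theorem, the zero set of $\partial_2u$ near $p$ is an analytic graph $x_2=g(x_1)$, and all nearby maximum points lie on it.
\item The function $u(x_1,g(x_1))-M$ has accumulating zeros, so it vanishes identically.
\item Hence the accumulation set is a compact analytic $1$-manifold without boundary inside $K$. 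Its components are precisely the closed curves that the Pohozaev identity rules out.
\end{itemize}
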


		The main ingredient of Theorem \ref{thm:rigidity}  is a rigidity mechanism that turns an interior
		maximum-modulus point sufficiently close to the boundary into a
		one-dimensional eigenfunction and forces the domain to be a rectangle.
		The proof of Theorem \ref{thm:rigidity} consists of three main steps.
		
		First, combining a perturbed Li--Yau type maximum-principle argument
		with smooth convex approximation, we establish the sharp gradient estimate
		\[
		|\nabla u|^2+\mu u^2\le \mu M^2,
		\qquad M=\max_{\overline{\Omega}}|u|,
		\]
		on arbitrary bounded convex planar domains. This estimate yields a
		cosine lower bound centered at any interior maximum point.
		
		Second, we develop a comparison argument that propagates an interior
		maximum to a line segment. After normalization, let $u(p)=M=1$,
		$\mu=1$, and $d=\operatorname{dist}(p,\partial\Omega)$.
		Using convexity and a nearest boundary point, we construct a
		comparison region $D$ and compare $u$ with the model solution
		$\cos x_2$. The essential difficulty is that the comparison
		function does not satisfy the Neumann condition on the boundary of $D$, so a direct maximum-principle argument
		is unavailable. We overcome this by introducing the positive part
		$z=(\cos x_2-u)^+$ and the auxiliary function
		\[
		h(x_1,x_2)=
		\cos\left(\frac{\pi x_1}{2d}\right)
		\cos\left(\frac{x_2}{2}\right).
		\]
		Combining a comparison identity with a quantitative inequality
		for supporting normals, we control the boundary contributions and
		obtain
		\[
		\left(\frac{\pi^2}{4d^2}-\frac34\right)
		\int_D zh\,dx\le0.
		\]
		This forces $z=0$ when $d<\pi/\sqrt3$.
		At the critical threshold $d=\pi/\sqrt3$, where the coefficient
		vanishes, we use the equality cases of the sharp one-dimensional
		Poincar\'e inequalities to obtain the same conclusion.
		Consequently, $u$ attains its maximum along the entire segment
		joining $p$ to a nearest boundary point.
		
		Third, we convert this maximum segment into global rigidity.
		Analytic uniqueness for the Helmholtz equation implies that
		$u=\cos x_2$ throughout the normalized domain.
		To recover the geometry without assuming boundary smoothness,
		we first test the weak Neumann equation with functions $q(x_2)$
		to show that the horizontal cross-sectional width is constant.
		Convexity then makes the lateral sides straight, and testing with
		$x_1q(x_2)$ shows that these sides are parallel to the $x_2$-axis.
		Thus the domain is a rectangle, and its side in the oscillation
		direction contains at least two half-periods.
		This yields the spectral separation
		\[
		\mu\ge4\mu_2(\Omega).
		\]

	The rest of this paper is organized as follows. Section \ref{sec:estimates} contains the approximation argument and
	the gradient estimate. Section \ref{sec:comparison}  shows how
	the maximum propagates from a point to a segment. Section \ref{sec:rigidity} derives rectangular rigidity
	and completes the proofs of the main results.

	\section{Domain perturbation and gradient estimates}
	\label{sec:estimates}
	
	Since a bounded convex domain has Lipschitz boundary, 
	$H^1(\Om)\hookrightarrow L^2(\Om)$ is compact. The Neumann Laplacian therefore has discrete spectrum and an $L^2(\Om)$-orthonormal basis of eigenfunctions, see \cite{Brezis,Grisvard}. The min--max principle gives
	
	\begin{equation}\label{eq:rayleigh}
		\mu_2(\Om)=
		\min_{\substack{0\ne v\in H^1(\Om)\\ \int_\Om v=0}}
		\frac{\int_\Om|\nabla v|^2\dd x}{\int_\Om v^2\dd x}.
	\end{equation}
	More generally,
	\begin{equation}\label{eq:minmax-all}
		\mu_j(\Om)=\min_{\substack{V\subset H^1(\Om)\\ \dim V=j}}
		\max_{0\ne v\in V}
		\frac{\int_\Om|\nabla v|^2\dd x}{\int_\Om v^2\dd x}.
	\end{equation}
	Interior elliptic regularity implies that a weak eigenfunction is smooth
	and real analytic in $\Om$.

	The outward unit normal is denoted by $\nu$, wherever it exists. If $\Om$ is a bounded convex domain,
	$\varphi\in\Lip(\cl \Om)$, and $\psi\in C^2(\R^2)$, then
	\begin{equation}\label{eq:green}
		\int_\Om(\nabla\varphi\cdot\nabla\psi+\varphi\Delta\psi)\dd x
		=\int_{\partial \Om}\varphi\,\partial_\nu\psi\dd s.
	\end{equation}

	\subsection{The gradient estimate on smooth convex domains}
	We use a Li–Yau type maximum-principle argument to obtain a pointwise gradient estimate
	for Neumann eigenfunctions on smooth convex planar domains, see also \cite{LiYau,XuXue} for eigenfunction gradient estimates.

	\begin{lemma}\label{lem:perturbed-gradient}
		Let $K\subset\R^2$ be a bounded smooth convex domain and let
		$w\in C^\infty(\cl K)$ satisfy $\partial_\nu w=0$.
		Fix $a>\mu>0$ and put
		\begin{equation}\label{eq:P-definitions}
			f=\Delta w+\mu w,\qquad P=|\nabla w|^2+aw^2,
			\qquad M_w=\max_{\cl K}|w|,\quad G_w=\max_{\cl K}|\nabla w|.
		\end{equation}
		Then
		\begin{equation}\label{eq:perturbed}
			|\nabla w|^2+aw^2
			\le aM_w^2+
			\frac{(6a-4\mu)M_w\|f\|_\infty+
				2G_w\|\nabla f\|_\infty}{2(a-\mu)}.
		\end{equation}
		All norms in this formula are over $\cl K$.
	\end{lemma}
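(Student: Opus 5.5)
The plan is a Bernstein/Li--Yau maximum-principle argument for $P=|\nabla w|^2+aw^2$. Let $x_0\in\cl K$ be a maximum point of $P$, and let $B$ denote the right-hand side of \eqref{eq:perturbed}. It suffices to show $|\nabla w(x_0)|^2\le B-aM_w^2$, since then $P\le P(x_0)\le |\nabla w(x_0)|^2+aM_w^2\le B$. The proof splits into an interior computation and a boundary step, which we reduce to the interior one.

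\emph{Interior computation.} Using $\Delta w=f-\mu w$, Bochner's formula gives
\begin{equation}
\Delta P=2|D^2w|^2+2\nabla w\cdot\nabla f+2(a-\mu)|\nabla w|^2+2awf-2a\mu w^2 .
\end{equation}
Suppose $\nabla P(x_0)=0$ and $\nabla w(x_0)\neq0$; if $\nabla w(x_0)=0$, then $P(x_0)=aw^2\le aM_w^2$ directly. The condition $\nabla P=0$ says $D^2w\,\nabla w=-aw\,\nabla w$, so $-aw$ is an eigenvalue of the symmetric $2\times2$ matrix $D^2w$. Since the trace is $f-\mu w$, the other eigenvalue is $(a-\mu)w+f$, and therefore
\begin{equation}
|D^2w|^2=a^2w^2+\bigl((a-\mu)w+f\bigr)^2\ge a^2w^2+2(a-\mu)wf .
\end{equation}
Insert this into $\Delta P(x_0)\le0$. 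The combination $2a^2w^2-2a\mu w^2=2a(a-\mu)w^2\ge0$ can be dropped, and this yields
\begin{equation}
2(a-\mu)|\nabla w|^2\le-(6a-4\mu)wf-2\nabla w\cdot\nabla f\le(6a-4\mu)M_w\|f\|_\infty+2G_w\|\nabla f\|_\infty ,
\end{equation}
which is exactly the required bound on $|\nabla w(x_0)|^2$.

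\emph{Boundary case, which I expect to be the main obstacle.} Here $\Delta P(x_0)\le0$ is not available. On $\partial K$ we have $\partial_\nu(w^2)=0$, and the Neumann condition together with convexity gives the Reilly-type identity $\partial_\nu|\nabla w|^2=-2\,\mathrm{II}(\nabla_Tw,\nabla_Tw)\le0$. Hence $\partial_\nu P\le0$ on $\partial K$, and the plan is to contradict this with the Hopf lemma. Assume $P(x_0)>B$ and that no interior point attains $\max P$.

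Redo the interior computation without assuming $\nabla P=0$. On the open set $U=\{P>B\}$ we have $|\nabla w|^2>B-aM_w^2\ge0$, so $|\nabla w|$ is bounded below there. Writing $D^2w\,\nabla w=\tfrac12\nabla P-aw\nabla w$ and decomposing $D^2w$ in the frame $(\nabla w/|\nabla w|,\nabla w^\perp/|\nabla w|)$, the same estimate should give
\begin{equation}
\Delta P-b\cdot\nabla P\ge2(a-\mu)|\nabla w|^2-(6a-4\mu)M_w\|f\|_\infty-2G_w\|\nabla f\|_\infty>0\quad\text{on }U ,
\end{equation}
with a drift $b$ that is bounded on $U$ because $|\nabla w|$ stays away from $0$ there. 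The Hopf lemma at $x_0$, applied to this strict subsolution on $U\cap K$ (interior sphere condition holds since $\partial K$ is smooth), then gives $\partial_\nu P(x_0)>0$, contradicting $\partial_\nu P\le0$. The strict inequality also excludes the degenerate constant case.

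The delicate points are the exact form of the drift term and keeping track of the $\nabla P$ cross terms in $|D^2w|^2$, which is why the lower bound on $|\nabla w|$ on $U$ is needed. Once these are checked, any maximum point of $P$ satisfies the interior estimate and \eqref{eq:perturbed} follows.
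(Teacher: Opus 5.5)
Your proposal is correct. Your interior computation matches the paper's. The difference is in the boundary case, which the paper handles without the Hopf lemma.

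At a boundary maximum $q$ of $P$, the one-sided derivative along the inward normal gives $\partial_\nu P(q)\ge0$. Combined with $\partial_\nu P=-2\kappa(\partial_\tau w)^2\le0$, this forces $\partial_\nu P(q)=0$, and hence $\nabla P(q)=0$. The one-sided second derivative along the inward normal then gives $D^2P(q)(\nu,\nu)\le0$. Along a unit-speed boundary curve $\beta$ through $q$, the term $\nabla P(q)\cdot\beta''(0)$ drops out, giving $D^2P(q)(\tau,\tau)\le0$. So, contrary to your remark, $\Delta P(q)\le0$ \emph{is} available at a boundary maximum. The identity is only ever evaluated where $\nabla P=0$, so the paper needs no drift, no lower bound on $|\nabla w|$, and no Hopf lemma.

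Your route also closes. In the frame $e_1=\nabla w/|\nabla w|$, $e_2=e_1^\perp$, one has
\[
w_{11}=-aw+\frac{\partial_{e_1}P}{2|\nabla w|},\qquad
w_{22}=(a-\mu)w+f-\frac{\partial_{e_1}P}{2|\nabla w|}.
\]
Discarding the nonnegative terms $2w_{12}^2$ and $\tfrac12(\partial_{e_1}P)^2/|\nabla w|^2$ gives exactly your inequality with
\[
b=-2\bigl((2a-\mu)w+f\bigr)|\nabla w|^{-2}\nabla w.
\]

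One correction is needed. On $U=\{P>B\}$ you only get $|\nabla w|^2>B-aw^2$. When $B=aM_w^2$ (for instance $f\equiv0$, the case used in Corollary~\ref{cor:smooth-gradient}), this is not a uniform lower bound: $|\nabla w|$ can tend to $0$ near points of $\partial U$ where $w^2\to M_w^2$. Localizing fixes this. Since $|\nabla w(x_0)|^2\ge P(x_0)-aM_w^2>0$, the drift $b$ is bounded on a small interior ball tangent at $x_0$ and contained in $U$, and that is all the local Hopf lemma needs.

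The trade-off is as follows. The paper's argument is shorter and purely pointwise, but it uses $P\in C^2$ up to $\partial K$. Yours is the standard Bernstein--Hopf mechanism: it needs the full identity with the $\nabla P$ cross terms, but only $P\in C^1(\cl K)$ together with the interior ball condition.
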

	
	\begin{proof}
		Bochner's identity in the plane, together with $\Delta w=f-\mu w$, gives
		\begin{equation}\label{eq:P-general}
			\Delta P=2|D^2w|^2+2(a-\mu)|\nabla w|^2
			+2\nabla w\cdot\nabla f+2awf-2a\mu w^2.
		\end{equation}
		At a point where $\nabla P=0$ and $\nabla w\ne0$, the identity
		$\nabla P=2D^2w\nabla w+2aw\nabla w$ gives
		\begin{equation}\label{eq:hessian-direction}
			D^2w\,\nabla w=-aw\nabla w.
		\end{equation}
		Hence $-aw$ is an eigenvalue of $D^2w$, and the other eigenvalue is
		$\Delta w+aw=(a-\mu)w+f$. Therefore
		\begin{equation}\label{eq:hessian-square}
			|D^2w|^2=a^2w^2+\bigl((a-\mu)w+f\bigr)^2.
		\end{equation}
		Substitution into \eqref{eq:P-general} yields
		\begin{equation}\label{eq:P-identity}
			\begin{split}
				\Delta P={}&2(a-\mu)\bigl((2a-\mu)w^2+|\nabla w|^2\bigr)\\
				&+(6a-4\mu)wf+2f^2+2\nabla w\cdot\nabla f.
			\end{split}
		\end{equation}
		
		Let $\tau$ be a unit tangent on $\partial K$, and use the curvature
		convention $D_\tau\nu=\kappa\tau$, so $\kappa\ge0$ for a convex boundary.
		Tangential differentiation of $\nabla w\cdot\nu=0$ gives
		\begin{equation}\label{eq:boundary-hessian}
			D^2w(\tau,\nu)=-\kappa\partial_\tau w.
		\end{equation}
		Since $\nabla w=(\partial_\tau w)\tau$ on $\partial K$, we obtain
		\begin{equation}\label{eq:boundary-P}
			\partial_\nu P=-2\kappa(\partial_\tau w)^2\le0.
		\end{equation}
		Let $q\in\cl K$ be a maximum point of $P$. Suppose first that
		$q\in\partial K$. For sufficiently small $s>0$, the point $q-s\nu(q)$
		lies in $K$, and $P(q-s\nu(q))\le P(q)$. Its one-sided derivative at
		$s=0$ is therefore nonpositive, so $\partial_\nu P(q)\ge0$.
		Together with \eqref{eq:boundary-P}, this gives $\partial_\nu P(q)=0$.
		The tangential derivative also vanishes, hence $\nabla P(q)=0$.
		Along the same inward normal ray the first derivative is now zero,
		so the one-sided maximum gives
		$D^2P(q)(-\nu,-\nu)=D^2P(q)(\nu,\nu)\le0$.
		If $\beta$ is a unit-speed boundary curve with $\beta(0)=q$ and
		$\beta'(0)=\tau$, the chain rule gives
		$(P\circ\beta)''(0)=D^2P(q)(\tau,\tau)+\nabla P(q)\cdot\beta''(0)$.
		The last term vanishes, and the boundary maximum makes this second
		derivative nonpositive. Adding the normal and tangential inequalities
		gives $\Delta P(q)\le0$. The usual interior maximum argument yields
		the same properties when $q\in K$. Thus in both cases
		\begin{equation}\label{eq:max-P-properties}
			\nabla P(q)=0,\qquad \Delta P(q)\le0.
		\end{equation}
		If $\nabla w(q)=0$, then $P(q)=aw(q)^2\le aM_w^2$.
		Otherwise apply \eqref{eq:P-identity} at $q$ and discard the nonnegative
		terms involving $w(q)^2$ and $f(q)^2$. This gives
		\begin{equation}\label{eq:maximum-gradient-bound}
			2(a-\mu)|\nabla w(q)|^2
			\le (6a-4\mu)M_w\|f\|_\infty+2G_w\|\nabla f\|_\infty.
		\end{equation}
		Adding $aw(q)^2\le aM_w^2$ gives the bound for $P(q)$. Since $q$ is
		a maximum point, the same bound holds throughout $\cl K$, which is \eqref{eq:perturbed}.
	\end{proof}
	
	\begin{corollary}\label{cor:smooth-gradient}
		If $w$ is a Neumann eigenfunction on a bounded smooth convex planar
		domain $K$ with eigenvalue $\mu>0$, then
		\begin{equation}\label{eq:smooth-gradient}
			|\nabla w|^2+\mu w^2\le \mu \max_{\cl K}|w|^2.
		\end{equation}
	\end{corollary}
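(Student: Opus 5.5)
Apply Lemma~\ref{lem:perturbed-gradient} with $w$ the given Neumann eigenfunction. Then $f=\Delta w+\mu w\equiv0$, so $\|f\|_\infty=\|\nabla f\|_\infty=0$. Since $w\in C^\infty(\cl K)$ on a smooth bounded domain, $M_w$ and $G_w$ are finite. Hence the error term in \eqref{eq:perturbed} vanishes, and for every $a>\mu$ we obtain
\begin{equation}
|\nabla w|^2+a w^2\le a M_w^2\qquad\text{on }\cl K.
\end{equation}

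For a fixed point $x\in\cl K$, the inequality is affine in $a$ with both sides continuous. Letting $a\downarrow\mu$ gives $|\nabla w(x)|^2+\mu w(x)^2\le\mu M_w^2$, which is \eqref{eq:smooth-gradient}.

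There is one preliminary point. The lemma asks for $w\in C^\infty(\cl K)$ with $\partial_\nu w=0$ classically. For a weak Neumann eigenfunction on a smooth bounded domain, this follows from standard elliptic regularity up to the boundary: $H^2$ regularity bootstrapped to $H^k$ for all $k$, then Sobolev embedding. The boundary condition then holds pointwise.

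No step is a genuine obstacle. Both the perturbation parameter $a>\mu$ and the right-hand-side terms in $f$ were built into the lemma precisely so that this limiting argument works. The case $a=\mu$ cannot be used directly because the lemma divides by $a-\mu$, which is why we pass to the limit.
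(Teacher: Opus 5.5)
Your proof is correct and is exactly the paper's argument: apply Lemma~\ref{lem:perturbed-gradient} with $f=0$ to get $|\nabla w|^2+aw^2\le aM_w^2$ for every $a>\mu$, then let $a\to\mu$. Your remark on boundary regularity, which ensures $w\in C^\infty(\cl K)$ with the classical Neumann condition, is a reasonable detail that the paper leaves implicit.
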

	
	\begin{proof}
		For each $a>\mu$, Lemma \ref{lem:perturbed-gradient} with $f=0$
		gives $|\nabla w|^2+aw^2\le aM_w^2$. Letting $a\to\mu$
		proves the assertion.
	\end{proof}
	
	\subsection{Smooth approximation and spectral convergence}
	
	We approximate the domain from outside and establish convergence of
	each eigenspace.
	
	\begin{lemma}\label{lem:outer-approximation}
		By translating the bounded convex domain $\Omega$, we may assume $0\in\Omega$.
		Then we can choose smooth bounded convex domains $\Omega_k$ such that
		\begin{equation}\label{eq:approximation}
			\cl\Om\subset\Om_k\subset(1+\varepsilon_k)\Om,
			\qquad \varepsilon_k \to 0.
		\end{equation}
		Moreover, the following statements hold.
		\begin{enumerate}
			\item For every fixed $j$, $\mu_j(\Om_k)\to\mu_j(\Om)$.
			\item One can choose $L^2(\Om_k)$-orthonormal eigenbases
			$\{\phi_j^k\}_{j\ge1}$ and a subsequence such that,
			for every fixed $j$, $\phi_j^k|_{\cl\Om}$ converges uniformly
			to a Lipschitz eigenfunction $\psi_j$ on $\cl\Om$ with
			eigenvalue $\mu_j(\Om)$.
			\item If $\mu$ has multiplicity $m$ and
			$\mu_J(\Om)=\cdots=\mu_{J+m-1}(\Om)=\mu$,
			then the corresponding limits $\psi_J,\ldots,\psi_{J+m-1}$
			form an orthonormal basis of $E_\mu(\Om)$.
		\end{enumerate}
	\end{lemma}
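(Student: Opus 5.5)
Construct the smooth outer domains first. With $0\in\Om$, the gauge (Minkowski functional) of $\Om$ is a convex function, and mollifying it produces smooth convex functions that converge uniformly to it. Taking sublevel sets and adding a small multiple of $|x|^2$ to guarantee strict convexity and a nonvanishing gradient, we obtain smooth convex $\Om_k$ satisfying \eqref{eq:approximation}. The inclusions pass to the closures because the gauge is continuous and equals $1$ on $\partial\Om$.

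Next we need uniform control of the eigenfunctions. The $L^2$ bound on the gradient comes from the Rayleigh quotient, and upper semicontinuity of $\mu_j$ follows by extension. Let $E:H^1(\Om)\to H^1(\R^2)$ be a fixed extension operator. For a $j$-dimensional test space $V\subset H^1(\Om)$ spanned by eigenfunctions, consider $EV|_{\Om_k}$. Since $|\Om_k\setminus\Om|\to0$ by \eqref{eq:approximation}, the integrals of $|\nabla Ev|^2$ and $(Ev)^2$ over $\Om_k\setminus\Om$ tend to zero uniformly on the unit sphere of $V$. Then \eqref{eq:minmax-all} gives $\limsup\mu_j(\Om_k)\le\mu_j(\Om)$. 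For the lower bound we want compactness in $C^0$. Corollary \ref{cor:smooth-gradient} gives $|\nabla\phi^k_j|^2\le\mu_j(\Om_k)\max|\phi_j^k|^2$, so it suffices to bound $\|\phi_j^k\|_\infty$ uniformly. I would do this with a uniform Sobolev/Moser estimate on convex domains, for instance through uniform cone conditions, since all $\Om_k$ contain a fixed ball around $0$ and have bounded diameter. An alternative is the heat-kernel bound $\|\phi\|_\infty\le e^{\mu t}\|P_t\|_{L^2\to L^\infty}$ with a uniform Neumann heat-kernel upper bound for convex domains, which holds by Li--Yau. Either way, $\phi_j^k$ are uniformly bounded and uniformly Lipschitz on $\cl\Om\subset\Om_k$. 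By Arzel\`a--Ascoli and a diagonal argument over $j$, a subsequence converges uniformly on $\cl\Om$ to a Lipschitz $\psi_j$, and along a further subsequence $\mu_j(\Om_k)\to\lambda_j\le\mu_j(\Om)$.

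Then identify the limits. For $\varphi\in H^1(\Om)$, test the weak equation on $\Om_k$ with $E\varphi$. The contribution from $\Om_k\setminus\Om$ vanishes, because $\nabla\phi_j^k$ and $\phi_j^k$ are uniformly bounded and $|\Om_k\setminus\Om|\to0$. On $\Om$, $\nabla\phi^k_j\rightharpoonup\nabla\psi_j$ weak-$*$ in $L^\infty$, by the uniform Lipschitz bound plus uniform convergence. Hence $\psi_j$ satisfies \eqref{eq:weak} with eigenvalue $\lambda_j$. Orthonormality passes to the limit, since $\int_{\Om_k\setminus\Om}\phi_i^k\phi_j^k\to0$, so $\{\psi_j\}$ is orthonormal in $L^2(\Om)$. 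The functions $\psi_1,\dots,\psi_j$ are then $j$ orthonormal eigenfunctions with eigenvalues $\lambda_1\le\dots\le\lambda_j$. Using their span in \eqref{eq:minmax-all} gives $\mu_j(\Om)\le\lambda_j$, so $\lambda_j=\mu_j(\Om)$. Because the limit is independent of the subsequence, the whole sequence converges, which is item 1. Item 2 follows at once. For item 3, the limits $\psi_J,\dots,\psi_{J+m-1}$ are $m$ orthonormal elements of $E_\mu(\Om)$, which has dimension $m$, so they form a basis.

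The main obstacle is the uniform $L^\infty$ bound on $\phi_j^k$ that starts the compactness step. Corollary \ref{cor:smooth-gradient} only converts a sup bound into a gradient bound. I would first try the Li--Yau/heat-kernel route, where the constants depend only on the diameter and the inner ball. If that becomes too technical, I would instead use a uniform $H^1\to L^q$ Sobolev embedding for convex domains containing a fixed ball, followed by Moser iteration on $-\Delta\phi=\mu\phi$ with Neumann data. Moser iteration works for Neumann problems given a uniform Sobolev inequality, so it should give the bound.
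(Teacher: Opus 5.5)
\textbf{Verdict: correct, but by a different route.} Your proof works, but at the step you single out as the main obstacle it takes a much heavier route than the paper.

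\textbf{The uniform $L^\infty$ bound.} You write that Corollary~\ref{cor:smooth-gradient} ``only converts a sup bound into a gradient bound.'' The paper observes that this conversion is self-improving, because $|\nabla\phi|\le\sqrt\Lambda\,A$, with $A=\max_{\cl{\Om_k}}|\phi|$, is homogeneous in $A$. The argument runs as follows:
\begin{enumerate}
\item Choose $q$ with $|\phi(q)|=A$.
\item Use convexity to place the ball $B_q=(1-\theta)q+\theta B(0,r_0)$ inside $\Om_k$. Here $\theta=\min\{\tfrac12,1/(2\sqrt\Lambda D_0)\}$ and $D_0$ is a common diameter bound.
\item Every $x\in B_q$ satisfies $|x-q|\le\theta D_0$, so $|\phi|\ge A/2$ on $B_q$.
\item The normalization $\int_{\Om_k}\phi^2=1$ then gives $A^2\le 4/(\pi\theta^2r_0^2)$, uniformly in $k$.
\end{enumerate}
No Sobolev, Moser, or heat-kernel input is needed.

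Your routes also work. The $\Om_k$ lie between $B(0,r_0)$ and $2\Om$, so they have uniform Lipschitz character, hence uniform Sobolev constants for Moser iteration. They also satisfy $|B(x,r)\cap\Om_k|\ge cr^2$ uniformly, which is what the Li--Yau Neumann heat-kernel bound on convex domains requires. But you would have to track that uniformity explicitly. What these routes buy is independence from the sharp convex gradient estimate, and that independence is not needed here.

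\textbf{Construction of $\Om_k$.} Using the gauge instead of the distance function $d_{\cl\Om}$ is equivalent. The $|x|^2$ term is unnecessary: a critical point of a convex function is a global minimizer, and the minimum value lies below the chosen level.

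\textbf{Upper semicontinuity $\limsup\mu_j(\Om_k)\le\mu_j(\Om)$.} The paper uses the dilated test functions $v(x/(1+\varepsilon_k))$. These exploit $\Om_k\subset(1+\varepsilon_k)\Om$ and avoid a Stein--Calder\'on extension operator. Your extension argument is more general, in that it works for any approximating sequence with $|\Om_k\setminus\Om|\to0$.

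The identification of the limits, orthonormality, the min--max step, and the multiplicity count coincide with the paper's argument.
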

	
	\begin{proof}
		Choose $r_0>0$ with $B(0,r_0)\subset\Om$, and put $C=\cl\Om$.
		The distance function $d_C(x)=\dist(x,C)$ is convex and $1$-Lipschitz.
		Let $\eta_\delta$ be a nonnegative smooth mollifier of integral $1$,
		supported in $B(0,\delta)$, and set $F_\delta=d_C*\eta_\delta$.
		Then $F_\delta$ is smooth and convex, and
		\begin{equation}\label{eq:mollified-distance}
			|F_\delta-d_C|\le\delta,\qquad
			F_\delta\le\delta\text{ on }C.
		\end{equation}
		Define $K_\delta=\{F_\delta<2\delta\}$. Since $F_\delta\le\delta$ on $C$,
		we have $C\subset K_\delta$. The convexity of $F_\delta$ implies that
		$K_\delta$ is convex. For $x\in K_\delta$,
		$d_C(x)\le F_\delta(x)+\delta<3\delta$, so
		$K_\delta\subset C+3\delta B(0,1)$ and $K_\delta$ is bounded.
		Moreover, we claim that $\partial K_\delta$ is smooth.
		In fact,
		suppose that $F_\delta(x)=2\delta$ and $\nabla F_\delta(x)=0$. 
		Since a critical point of a differentiable convex function
		is a global minimum,
		$F_\delta$ would have global minimum value $2\delta$, contradicting $F_\delta\le\delta$ on $C$.
		Hence $\nabla F_\delta\ne 0$ on $\{ F_\delta = 2\delta\}$,
		and the implicit function theorem shows that $\partial K_\delta$ is smooth.
		
		For a unit vector $n$, define the support function
		$h_C(n)=\max_{y\in C}y\cdot n$. Since $B(0,r_0)\subset\Om$,
		we have $h_C(n)\ge r_0$. Fix $\varepsilon>0$ and choose $\delta>0$
		with $3\delta<\varepsilon r_0$. Every $x\in C+3\delta B(0,1)$
		satisfies $x\cdot n\le h_C(n)+3\delta$ for every unit $n$, and hence
		\[
		(1+\varepsilon)h_C(n)-x\cdot n
		\ge\varepsilon r_0-3\delta=:a>0.
		\]
		For $y\in B(x,a)$, the inequality
		$y\cdot n\le x\cdot n+|y-x|<(1+\varepsilon)h_C(n)$
		holds for every unit $n$. A closed convex set is the intersection of
		its supporting closed half-planes, so $B(x,a)\subset(1+\varepsilon)C$.
		Thus $x$ is an interior point of $(1+\varepsilon)C$.
		Since $\operatorname{int}C=\Om$, we obtain
		$C+3\delta B(0,1)\subset(1+\varepsilon)\Om$, and consequently
		$K_\delta\subset(1+\varepsilon)\Om$.
		Choose $\varepsilon_k=1/k$, $\delta_k=r_0/(6k)$, and
		$\Om_k=K_{\delta_k}$. Then $3\delta_k<\varepsilon_k r_0$, which
		proves \eqref{eq:approximation}. All these domains contain $B(0,r_0)$
		and lie in $2\Om$, so their diameters are bounded by a common constant
		$D_0>2\diam\Om$.

		Let $V_j$ be the span of the first $j$ eigenfunctions on $\Om$.
		For $v\in V_j$ set
		\begin{equation}\label{eq:scaled-test}
			T_kv(x)=v\bigl(x/(1+\varepsilon_k)\bigr),\qquad x\in\Om_k,
			\qquad A_k=(1+\varepsilon_k)^{-1}\Om_k.
		\end{equation}
		The inclusions give
		$(1+\varepsilon_k)^{-1}\Om\subset A_k\subset\Om$.
		In dimension two, a change of variables gives
		\begin{equation}\label{eq:scaled-rayleigh}
			\frac{\int_{\Om_k}|\nabla T_kv|^2}{\int_{\Om_k}|T_kv|^2}
			=\frac{1}{(1+\varepsilon_k)^2}
			\frac{\int_{A_k}|\nabla v|^2}{\int_{A_k}v^2}.
		\end{equation}
		Choose an $L^2(\Om)$-orthonormal basis $v_1,\ldots,v_j$ of $V_j$.
		Since $|\Om\setminus A_k|\to0$, absolute continuity of the integral gives
		$\eta_k:=\sum_{a=1}^j\int_{\Om\setminus A_k}v_a^2\dd x\to0$.
		For $v=\sum_{a=1}^jc_av_a$, the Cauchy--Schwarz inequality gives
		$v^2\le(\sum_{a=1}^jc_a^2)\sum_{a=1}^jv_a^2$ pointwise.
		Integrating over $\Om\setminus A_k$ and using orthonormality, we obtain
		\[
		\int_{A_k}v^2\dd x\ge(1-\eta_k)\int_\Om v^2\dd x
		\qquad(v\in V_j).
		\]
		For all sufficiently large $k$, we have $\eta_k<1$, so a change of
		variables shows that $T_kv\ne0$ whenever $0\ne v\in V_j$.
		Thus $T_k$ is injective on $V_j$, and $T_kV_j$ is a $j$-dimensional
		subspace of $H^1(\Om_k)$.
		We may therefore apply the min--max characterization
		\eqref{eq:minmax-all} to the $j$-dimensional space $T_kV_j$
		to obtain an upper bound for $\mu_j(\Om_k)$.
		Since $A_k\subset\Om$ and $V_j$ is spanned by the first $j$
		eigenfunctions on $\Om$, we have
		\[
		\int_{A_k}|\nabla v|^2\dd x
		\le\int_\Om|\nabla v|^2\dd x
		\le\mu_j(\Om)\int_\Om v^2\dd x
		\qquad(v\in V_j).
		\]
		Combining these estimates with \eqref{eq:scaled-rayleigh},
		and applying \eqref{eq:minmax-all} with $T_kV_j$ gives
		\[
		\mu_j(\Om_k)
		\le\sup_{0\ne v\in V_j}
		\frac{\int_{\Om_k}|\nabla T_kv|^2\dd x}
		{\int_{\Om_k}|T_kv|^2\dd x}
		\le\frac{\mu_j(\Om)}{(1+\varepsilon_k)^2(1-\eta_k)}.
		\]
		Letting $k\to\infty$, we conclude that

		\begin{equation}\label{eq:limsup-spectrum}
			\limsup_{k\to\infty}\mu_j(\Om_k)\le\mu_j(\Om).
		\end{equation}

		Fix $\Lambda>0$. We next obtain uniform bounds for normalized
		eigenfunctions with eigenvalue at most $\Lambda$.
		For an $L^2(\Omega_k)$-normalized eigenfunction $\phi$ with eigenvalue at most $\Lambda$,
		Corollary \ref{cor:smooth-gradient} gives
		$|\nabla\phi|\le\sqrt{\Lambda} A$, where $A=\max_{\cl{\Omega_k}}|\phi|$.
		In particular, convexity also gives $\Lip(\phi)\le \sqrt{\Lambda} A$.
		Both bounds also hold for the constant eigenfunction with eigenvalue zero.
		Choose $q$ where $|\phi(q)|=A$ and put
		\begin{equation}\label{eq:contracted-ball}
			\theta=\min\left\{\frac12,\frac1{2\sqrt\Lambda D_0}\right\},
			\qquad B_q=(1-\theta)q+\theta B(0,r_0).
		\end{equation}
		This open ball lies in $\Om_k$ even if $q\in\partial\Om_k$, and every
		$x\in B_q$ satisfies $|x-q|\le\theta D_0$.
		Hence the Lipschitz bound gives $|\phi(x)|\ge A/2$ and
		\begin{equation}\label{eq:sup-bound-normalized}
			1=\int_{\Om_k}\phi^2\dd x
			\ge\frac{A^2}{4}\pi\theta^2r_0^2.
		\end{equation}
		Thus both $\|\phi\|_{L^\infty}$ and $\Lip(\phi)$ are bounded
		uniformly in $k$ for all normalized eigenfunctions with eigenvalue at most $\Lambda$.
		
		Write $\mu_j^k=\mu_j(\Om_k)$ and choose an orthonormal eigenbasis
		$\{\phi_j^k\}_{j\ge1}$ on each domain.
		For each fixed $j$, \eqref{eq:limsup-spectrum} bounds $\mu_j^k$, and the preceding estimates give uniform $L^\infty$ and Lipschitz bounds for $\phi_j^k$. Applying the Arzel\`a--Ascoli theorem and weak compactness
		in $H^1(\Om)$, and taking a diagonal subsequence over $j$, we obtain a subsequence such that, for every fixed $j$, we have
		\begin{equation}\label{eq:basis-limits}
			\mu_j^k\to\lambda_j\le\mu_j(\Om),\qquad
			\phi_j^k|_{\cl\Om}\to\psi_j\quad\text{uniformly},
			\qquad\nabla\phi_j^k\rightharpoonup\nabla\psi_j\text{ in }L^2(\Om),
		\end{equation}
		and each $\psi_j$ is Lipschitz. For fixed $i,j$, the uniform supremum
		bounds and $|\Om_k\setminus\Om|\to0$ imply
		$\int_{\Om_k\setminus\Om}\phi_i^k\phi_j^k\dd x\to0$.
		Combining this with uniform convergence on $\cl\Om$ gives
		\begin{equation}\label{eq:limit-orthonormal}
			\int_\Om\psi_i\psi_j\dd x
			=\lim_{k\to\infty}\int_{\Om_k}\phi_i^k\phi_j^k\dd x=\delta_{ij}.
		\end{equation}
		To pass to the weak equation, first take an ambient test function
		$\zeta\in C_c^\infty(\R^2)$. The terms over $\Om_k\setminus\Om$ vanish
		because $\phi_j^k$ and $\nabla\phi_j^k$ are uniformly bounded there.
		Weak convergence of the gradients and uniform convergence of the functions
		on $\Om$ therefore give
		\begin{equation}\label{eq:limit-weak}
			\int_\Om\nabla\psi_j\cdot\nabla\zeta\dd x
			=\lambda_j\int_\Om\psi_j\zeta\dd x.
		\end{equation}
		Restrictions of functions in $C_c^\infty(\R^2)$ are dense in
		$H^1(\Om)$ because $\Om$ is a bounded Lipschitz domain
		\cite{Brezis,Grisvard}. Both sides of \eqref{eq:limit-weak} are
		continuous in the $H^1$ norm, so the equality holds for every
		$\zeta\in H^1(\Om)$.
		Thus $\psi_j$ is a weak Neumann eigenfunction with eigenvalue $\lambda_j$.

		For $i\le j$, the inequalities $\mu_i^k\le\mu_j^k$ hold for
		every $k$. Passing to the limit gives $\lambda_i\le\lambda_j$.
		On the span of
		$\psi_1,\ldots,\psi_j$, orthonormality and \eqref{eq:limit-weak} give
		\begin{equation}\label{eq:limit-rayleigh}
			\frac{\int_\Om|\nabla\sum_{i=1}^jc_i\psi_i|^2}
			{\int_\Om|\sum_{i=1}^jc_i\psi_i|^2}
			=\frac{\sum_{i=1}^j\lambda_i c_i^2}{\sum_{i=1}^jc_i^2}
			\le\lambda_j.
		\end{equation}
		The min--max principle now implies $\mu_j(\Om)\le\lambda_j$.
		Together with \eqref{eq:basis-limits}, this gives
		$\lambda_j=\mu_j(\Om)$. Starting from any subsequence of the original
		domains, the same argument produces a further subsequence with this
		eigenvalue limit. Hence $\mu_j(\Om_k)\to\mu_j(\Om)$ along the full sequence.
		
		Let $\mu$ be an eigenvalue of multiplicity $m$, and let $J$ be its first
		index in the ordered sequence of eigenvalues. Thus
		\begin{equation}\label{eq:multiple-eigenvalue}
			\mu_J(\Om)=\cdots=\mu_{J+m-1}(\Om)=\mu.
		\end{equation}
		The corresponding limits are $m$ orthonormal elements of the
		$m$-dimensional eigenspace $E_\mu(\Om)$ and hence form a basis.
	\end{proof}
	
	\subsection{The gradient estimate on a general convex domain}
	
	\begin{proposition}\label{prop:gradient}
		Let $u$ be a nonzero weak Neumann eigenfunction on a bounded convex planar
		domain $\Om$, with eigenvalue $\mu>0$. Then $u$ has a Lipschitz representative
		on $\cl\Om$.
		Set $M=\max_{\cl\Om}|u|$, we have
		\begin{equation}\label{eq:gradient}
			|\nabla u|^2+\mu u^2\le\mu M^2\qquad\text{in }\Om,
		\end{equation}
		and
		\begin{equation}\label{eq:phase}
			\left|\arccos\frac{u(x)}M-\arccos\frac{u(y)}M\right|
			\le\sqrt\mu\,|x-y|\qquad(x,y\in\cl\Om).
		\end{equation}
		Consequently, if $u(p)=M$, then
		\begin{equation}\label{eq:barrier}
			u(x)\ge M\cos\bigl(\min\{\sqrt\mu\,|x-p|,\pi\}\bigr)
			\qquad(x\in\cl\Om).
		\end{equation}
	\end{proposition}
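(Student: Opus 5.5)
We will prove the proposition by transferring the smooth estimate of Corollary \ref{cor:smooth-gradient} to $\Om$ through the outer approximation of Lemma \ref{lem:outer-approximation}. The estimate itself then yields the phase inequality, and the phase inequality yields the barrier.

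\textbf{Step 1: Lipschitz representative and the gradient estimate.} Let $\mu=\mu_J(\Om)=\cdots=\mu_{J+m-1}(\Om)$ have multiplicity $m$. By Lemma \ref{lem:outer-approximation}(iii) we can write $u=\sum_{i}c_i\psi_{J+i}$, where the $\psi$'s are the Lipschitz uniform limits of $\phi^k_{J+i}$. This already gives a Lipschitz representative. Set $w_k=\sum_i c_i\phi^k_{J+i}$.

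These $w_k$ are not eigenfunctions on $\Om_k$ unless the approximating eigenvalues $\mu^k_{J+i}$ coincide. So we apply Lemma \ref{lem:perturbed-gradient} instead of the corollary, with $f_k=\Delta w_k+\mu w_k=\sum_i c_i(\mu-\mu^k_{J+i})\phi^k_{J+i}$. The bounds established in the proof of Lemma \ref{lem:outer-approximation} control $\|\phi^k_j\|_\infty$ and $\|\nabla\phi^k_j\|_\infty$ uniformly in $k$, and $\mu^k_{J+i}\to\mu$. Hence $\|f_k\|_\infty\to0$ and $\|\nabla f_k\|_\infty\to0$, while $G_{w_k}$ stays bounded.

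For fixed $a>\mu$, the inequality \eqref{eq:perturbed} therefore gives
\[
|\nabla w_k|^2+a w_k^2\le aM_{w_k}^2+o(1).
\]
Here $M_{w_k}\to M$: the lower bound $\liminf M_{w_k}\ge M$ follows from uniform convergence on $\cl\Om$. For the upper bound, the Lipschitz bounds and $\Om_k\subset(1+\varepsilon_k)\Om$ show that every point of $\Om_k$ lies within $o(1)$ of $\cl\Om$.

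The gradient must be passed to the limit on compact subsets $K\Subset\Om$. There I would use interior elliptic estimates, since $\Delta w_k$ is uniformly bounded in $C^1$, or equivalently the weak-$*$ lower semicontinuity of the convex functional. Either gives $|\nabla u|^2+au^2\le aM^2$ almost everywhere, and hence everywhere in $\Om$ by smoothness. Letting $a\downarrow\mu$ gives \eqref{eq:gradient}.

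\textbf{Step 2: the phase inequality.} Set $\theta=\arccos(u/M)$. Where $|u|<M$, \eqref{eq:gradient} gives
\[
|\nabla\theta|=\frac{|\nabla u|}{\sqrt{M^2-u^2}}\le\sqrt\mu .
\]
Fix $x,y\in\Om$. Since $\Om$ is convex, we integrate along the segment $[x,y]$, and this is where the difficulty lies: $\arccos$ is not Lipschitz near $\pm1$.

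To handle this, I would use $\theta_\varepsilon=\arccos\bigl(u/(M+\varepsilon)\bigr)$, which is smooth because $|u|/(M+\varepsilon)<1$. It satisfies
\[
|\nabla\theta_\varepsilon|=\frac{|\nabla u|}{\sqrt{(M+\varepsilon)^2-u^2}}\le\frac{|\nabla u|}{\sqrt{M^2-u^2}}\le\sqrt\mu
\]
wherever $|u|<M$. Where $|u|=M$, the estimate \eqref{eq:gradient} forces $\nabla u=0$, so $\nabla\theta_\varepsilon=0$ there. Integrating along the segment gives $|\theta_\varepsilon(x)-\theta_\varepsilon(y)|\le\sqrt\mu|x-y|$. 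Letting $\varepsilon\to0$ and then extending to $\cl\Om$ by continuity proves \eqref{eq:phase}.

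\textbf{Step 3: the barrier.} If $u(p)=M$, then $\theta(p)=0$, and \eqref{eq:phase} gives $\theta(x)\le\min\{\sqrt\mu|x-p|,\pi\}$, using also $\theta\le\pi$. Since $\cos$ is decreasing on $[0,\pi]$, this yields \eqref{eq:barrier}.

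The main obstacle is Step 1, namely justifying that the perturbation terms $f_k$ vanish in $C^1$ when $\mu$ is multiple. This relies on the uniform Lipschitz bounds, and on the convergence $\mu^k_{J+i}\to\mu$, already established in Lemma \ref{lem:outer-approximation}.
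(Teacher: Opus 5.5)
Your proof is correct, and it uses the same machinery as the paper: the combinations $w_k=\sum c_i\phi^k_i$ on the outer approximants, Lemma \ref{lem:perturbed-gradient} with $f_k\to0$ in $C^1$, and $M_{w_k}\to M$. The difference is the order in which the two estimates are established and passed to the limit.

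\textbf{Your order.} You first transfer the pointwise gradient bound from $w_k$ to $u$. This requires convergence of gradients, which you obtain from interior Schauder estimates (or from weak-$*$ closedness of the pointwise constraint set). You then integrate the regularized phase $\arccos\bigl(u/(M+\varepsilon)\bigr)$ along segments in $\Om$ and extend to $\cl\Om$ by continuity.

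\textbf{The paper's order.} It works with the regularized phase $\arccos\bigl(w_k/\sqrt{M_k^2+\delta}\bigr)$ on $\Om_k$ itself. The bound $|\nabla w_k|^2\le a(M_k^2+\delta-w_k^2)$ makes this phase $\sqrt a$-Lipschitz, and integrating on segments of $\Om_k\supset\cl\Om$ gives the phase inequality for $w_k$ directly on $\cl\Om$. It then lets $k\to\infty$, $\delta\to0$, $a\to\mu$. Because the phase inequality involves only function values, uniform convergence $w_k\to u$ suffices, and no convergence of gradients is ever needed. The gradient estimate is recovered afterwards by differentiating the $\sqrt\mu$-Lipschitz function $\arccos(u/M)$ at interior points where $|u|<M$, using $\nabla u=0$ where $|u|=M$.

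\textbf{Comparison.} Your route costs an extra interior-regularity (or weak-$*$) step but is sound. The paper's ordering is slightly more economical.
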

	
	\begin{proof}
		Let $\mu$ have multiplicity $m$, and let $J$ be its first index, as in
		\eqref{eq:multiple-eigenvalue}. By Lemma \ref{lem:outer-approximation},
		$\psi_J,\ldots,\psi_{J+m-1}$ form an orthonormal basis of $E_\mu(\Om)$.
		Write $u=\sum_{i=J}^{J+m-1}c_i\psi_i$ and set
		\begin{equation}\label{eq:eigenfunction-approximation}
			w_k=\sum_{i=J}^{J+m-1}c_i\phi_i^k,\qquad
			f_k=\Delta w_k+\mu w_k
			=\sum_{i=J}^{J+m-1}c_i(\mu-\mu_i^k)\phi_i^k.
		\end{equation}
		The finite sum defining $u$ provides its Lipschitz representative.
		The function $w_k$ satisfies the Neumann condition on $\partial\Om_k$,
		although it need not be an exact eigenfunction. The uniform bounds in the proof of
		Lemma \ref{lem:outer-approximation} give
		\begin{equation}\label{eq:approximation-convergence}
			\begin{gathered}
				w_k\to u\quad\text{uniformly on }\cl\Om,\qquad
				\sup_k\|\nabla w_k\|_{L^\infty(\Om_k)}<\infty,\\
				\|f_k\|_{L^\infty(\Om_k)}+
				\|\nabla f_k\|_{L^\infty(\Om_k)}\longrightarrow0.
			\end{gathered}
		\end{equation}

		Let $M_k=\max_{\cl{\Om_k}}|w_k|$. The lower limit is at least $M$, since
		$\Om\subset\Om_k$ and convergence is uniform on $\cl\Om$.
		Conversely, if $x\in\cl{\Om_k}$, then
		$y=x/(1+\varepsilon_k)\in\cl\Om$. If
		$R_0=\max_{\cl\Om}|y|$, then $|x-y|\le\varepsilon_k R_0$.
		The common Lipschitz bound for $w_k$ gives
		\begin{equation}\label{eq:maxima-convergence}
			M_k\le\max_{\cl\Om}|w_k|+C\varepsilon_k R_0,
			\qquad\text{and therefore }M_k\to M.
		\end{equation}
		
		Fix $a>\mu$ and $\delta>0$. By Lemma \ref{lem:perturbed-gradient}, the
		second term in \eqref{eq:perturbed} tends to zero. For all sufficiently
		large $k$ it is at most $a\delta$, so
		\begin{equation}\label{eq:regularized-gradient}
			|\nabla w_k|^2\le a(M_k^2+\delta-w_k^2).
		\end{equation}
		Since $M_k^2+\delta-w_k^2\ge\delta$, the function
		\begin{equation}\label{eq:regularized-phase}
			\Phi_k=\arccos\frac{w_k}{\sqrt{M_k^2+\delta}}
		\end{equation}
		is smooth and satisfies $|\nabla\Phi_k|\le\sqrt a$. Integrating on segments in the convex
		set $\Om_k$ gives
		\begin{equation}\label{eq:regularized-phase-distance}
			\left|\arccos\frac{w_k(x)}{\sqrt{M_k^2+\delta}}
			-\arccos\frac{w_k(y)}{\sqrt{M_k^2+\delta}}\right|
			\le\sqrt a\,|x-y| \qquad(x,y\in\cl\Om).
		\end{equation}
		First let $k\to\infty$, using
		\eqref{eq:approximation-convergence} and \eqref{eq:maxima-convergence}.
		Next let $\delta\to0$, and finally let $a\to\mu$.
		Continuity of $\arccos$ on $[-1,1]$ justifies the second limit,
		even where $u=\pm M$. This proves \eqref{eq:phase}.

		At an interior point with $|u|<M$, 
		\begin{equation}\label{eq:phase-differentiation}
			\left|\nabla\arccos\frac uM\right|^2
			=\frac{|\nabla u|^2}{M^2-u^2}\le\mu.
		\end{equation}
		This proves \eqref{eq:gradient} there. At an interior point with $|u|=M$,
		the smooth function $u$ is at a local extremum, so $\nabla u=0$ and the
		same inequality holds. Finally, if $u(p)=M$, \eqref{eq:phase} gives
		$\arccos(u(x)/M)\le\sqrt\mu|x-p|$. For distances at most $\pi/\sqrt\mu$,
		the monotonicity of cosine on $[0,\pi]$ gives the desired
		lower bound. For larger distances, use $u(x)\ge-M$.
		This proves \eqref{eq:barrier}.
	\end{proof}

	\section{Propagation of the maximum from a point to a segment}
	\label{sec:comparison}
	
	\subsection{Supporting geometry and notation}
	
	Throughout this section, $u$ is a weak Neumann eigenfunction on the bounded convex planar domain $\Om$ normalized by
	\begin{equation}\label{eq:normalized-problem}
		\mu=1,\qquad u(p)=\max_{\cl\Om}|u|=1,\qquad p\in\Om.
	\end{equation}
	Put $d=\dist(p,\partial\Om)>0$ and choose
	a nearest boundary point $x_0$.
	
	For $x=(x_1,x_2)\in\partial\Om$, an outward supporting unit normal at $x$
	is a vector $n=(n_1,n_2)\in\R^2$ with $|n|=1$ such that
	\[
	(y-x)\cdot n\le0\qquad\text{for every }y\in\cl\Om.
	\]
	Equivalently, the line $\{y:(y-x)\cdot n=0\}$ supports $\Om$ at $x$,
	and $n$ points into the half-plane opposite to $\Om$.
	Convexity ensures the existence of such a normal at every boundary point.
	At a smooth boundary point the supporting normal is unique and
	agrees with the outward unit normal $\nu$. At a nonsmooth point it need
	not be unique. The outward unit normal $\nu$ exists almost everywhere
	on $\partial\Om$. We write $\nu=(\nu_1,\nu_2)$,
	where $\nu_i$ denotes the component of $\nu$ in the $x_i$-direction.
	\begin{lemma}\label{lem:supporting-geometry}
		After a rigid motion, we have
		\begin{equation}\label{eq:configuration}
			x_0=0,\qquad p=(d,0),\qquad
			B(p,d)\subset\Om\subset\{x_1>0\}.
		\end{equation}
		For any $x\in\partial\Om$ and any outward supporting unit normal $n$ at $x$, we have
		\begin{equation}\label{eq:normal}
			x_2n_2\ge d+(d-x_1)n_1.
		\end{equation}
		In particular, if $0\le x_1\le d$, then $x_2n_2\ge x_1$.
	\end{lemma}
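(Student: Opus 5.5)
First choose coordinates. Translate $\Om$ so that $x_0=0$, and rotate so that $p-x_0$ points along the positive $x_1$-axis. Since $|p-x_0|=d$, this gives $p=(d,0)$. Because $d=\dist(p,\partial\Om)$ and $\Om$ is open, $B(p,d)\subset\Om$. Next we identify the supporting line at $x_0$. Since $x_0$ is a nearest boundary point, $B(p,d)\subset\Om$ and $0\in\partial B(p,d)\cap\partial\Om$. Any supporting line of $\Om$ at $0$ also supports the disc $B(p,d)$ at $0$. The disc has a unique supporting line at $0$, namely the tangent line $\{x_1=0\}$. Hence $\{x_1=0\}$ supports $\Om$, and $\Om$ lies on the same side as $p$. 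Since $\Om$ is open, this gives $\Om\subset\{x_1>0\}$, which proves \eqref{eq:configuration}.

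For \eqref{eq:normal}, fix $x\in\partial\Om$ and an outward supporting unit normal $n$ at $x$. Then $(y-x)\cdot n\le0$ for all $y\in\cl\Om$, in particular for all $y\in\cl{B(p,d)}$. Maximizing over the closed disc gives $(p-x)\cdot n+d\le0$, that is,
\[
x\cdot n\ge p\cdot n+d=dn_1+d.
\]
Writing $x\cdot n=x_1n_1+x_2n_2$, we get $x_2n_2\ge d+dn_1-x_1n_1=d+(d-x_1)n_1$, which is \eqref{eq:normal}.

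For the final claim, assume $0\le x_1\le d$. Then $d-x_1\ge0$, and $n_1\ge-1$ because $|n|=1$. Hence
\[
d+(d-x_1)n_1\ge d-(d-x_1)=x_1,
\]
so $x_2n_2\ge x_1$.

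The only point that needs care is the uniqueness of the supporting line at $x_0$. It follows because a disc has a unique tangent line at each boundary point, and every supporting line of $\Om$ at $x_0$ must also support $B(p,d)$ there. The rest is elementary.
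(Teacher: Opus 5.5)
Your proposal is correct and follows essentially the same route as the paper. You fix coordinates via the unique tangent line of $B(p,d)$ at $x_0$, and you obtain \eqref{eq:normal} by testing the supporting inequality at the farthest point of $\cl{B(p,d)}$ in direction $n$; the paper tests at exactly this point, $y=p+dn$, and derives the final claim from $n_1\ge-1$ just as you do.
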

	
	\begin{proof}
		The open disk $B(p,d)$ lies in $\Om$ by the definition of distance to the
		boundary, and its closure lies in $\cl\Om$. A supporting line to $\Om$ at
		$x_0$ also supports this closed disk at $x_0$. The disk has a unique tangent
		line there, perpendicular to $p-x_0$. 
		Translating $x_0$ to the origin and taking $p-x_0$ as the positive $x_1$-direction gives \eqref{eq:configuration}.

		For a supporting normal $n$ at any boundary point $x$, every
		$y\in\cl\Om$ satisfies $(y-x)\cdot n\le0$. Apply this to
		$y=p+dn\in\cl{B(p,d)}$. Since $|n|=1$,
		\begin{equation}\label{eq:normal-calculation}
			0\ge(p+dn-x)\cdot n
			=(d-x_1)n_1-x_2n_2+d.
		\end{equation}
		This proves \eqref{eq:normal}. If $0\le x_1\le d$, then
		$d-x_1\ge0$ and $n_1\ge-1$, so
		$d+(d-x_1)n_1\ge d-(d-x_1)=x_1$. Thus $x_2n_2\ge x_1$.
	\end{proof}
	
	Proposition \ref{prop:gradient} gives
	\begin{equation}\label{eq:normalized-barrier}
		-1\le u\le1,\qquad
		u(x)\ge\cos\bigl(\min\{|x-p|,\pi\}\bigr)
		\qquad(x\in\cl\Om).
	\end{equation}
	Set
	\begin{equation}\label{eq:comparison-sets}
		S=(0,d)\times(-\pi,\pi),\qquad D=\Om\cap S,
		\qquad\Lambda=\partial\Om\cap S.
	\end{equation}
	Define the remaining boundary pieces by
	\begin{equation}\label{eq:boundary-partition}
		\begin{split}
			\Lambda_0&=\cl\Om\cap(\{0\}\times(-\pi,\pi)),\\
			\Gamma&=\cl\Om\cap
			\bigl((\{d\}\times[-\pi,\pi])\cup([0,d]\times\{-\pi,\pi\})\bigr).
		\end{split}
	\end{equation}
	The point $(d/2,0)$ belongs to $B(p,d)\cap S$, so $D$ is
	nonempty. It is convex as the intersection of two convex sets.
	For any $x\in\cl\Om\cap\cl S$, the segment from $x$ to $(d/2,0)$,
	apart from $x$ itself, lies in $D$. Letting points on this segment
	approach $x$ gives $\cl D=\cl\Om\cap\cl S$.
	The normal on $\Lambda$ is the normal of $\Om$, and on $\Lambda_0$ it is
	$(-1,0)$ almost everywhere.

	\subsection{The comparison argument}
	Set $\gamma=\pi/(2d)$. We establish the following boundary inequality needed in the proof of Proposition \ref{prop:comparison}.
	
	\begin{lemma}\label{lem:boundary}
		For $x\in\Lambda$ and any outward supporting unit normal $n$ at $x$,
		put $z(x)=(\cos x_2-u(x))^+$. Then
		\begin{equation}\label{eq:boundary}
			z(x)\left(\gamma\tan(\gamma x_1)n_1
			+\frac12\tan\frac{x_2}{2} n_2\right)
			<\sin(x_2)n_2.
		\end{equation}
	\end{lemma}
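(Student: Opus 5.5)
The plan is to reduce \eqref{eq:boundary} to elementary trigonometric inequalities, using the barrier \eqref{eq:normalized-barrier} and the supporting-normal inequality \eqref{eq:normal} from Lemma \ref{lem:supporting-geometry}.

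\textbf{Sign of the right-hand side and the trivial case.} Since $S$ is open, every $x\in\Lambda$ satisfies $0<x_1<d$ and $|x_2|<\pi$. By the last assertion of Lemma \ref{lem:supporting-geometry}, $x_2n_2\ge x_1>0$. Hence $x_2\neq0$ and $n_2$ has the sign of $x_2$. As $|x_2|<\pi$, the quantity $\sin(x_2)\,n_2$ is strictly positive. So if $z(x)=0$ the inequality is immediate, and from now on we assume $z(x)>0$, i.e.\ $u(x)<\cos x_2$.

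\textbf{The $n_2$-term.} By \eqref{eq:normalized-barrier}, $u\ge-1$, so $z\le 1+\cos x_2=2\cos^2(x_2/2)$. Note that $\tan(x_2/2)\,n_2\ge0$, because it has the sign of $x_2n_2$. Therefore
\[
\tfrac12 z\tan\tfrac{x_2}{2}\,n_2\le \tan\tfrac{x_2}{2}\cos^2\tfrac{x_2}{2}\,n_2=\tfrac12\sin(x_2)\,n_2 .
\]
This term therefore uses at most half of the right-hand side. Since $0<\gamma x_1<\pi/2$, the factor $\gamma\tan(\gamma x_1)$ is positive. If $n_1\le0$, the $n_1$-term is then nonpositive and strict inequality follows from $\sin(x_2)n_2>0$.

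\textbf{The $n_1$-term when $n_1>0$.} Here I use the barrier $u(x)\ge\cos(\min\{|x-p|,\pi\})$ together with $|x-p|^2=(d-x_1)^2+x_2^2$. From $\cos a-\cos b=\int_a^b\sin t\,dt\le (b^2-a^2)/2$ for $0\le a\le b$, we get
\[
z\le\tfrac12(d-x_1)^2 .
\]
The case where $|x-p|$ exceeds $\pi$ would need to be handled separately. Since $\gamma\tan(\gamma x_1)=\gamma\cot(\gamma(d-x_1))\le 1/(d-x_1)$, this gives $z\,\gamma\tan(\gamma x_1)\,n_1\le \tfrac12(d-x_1)n_1$. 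On the other hand, \eqref{eq:normal} gives $x_2n_2\ge d+(d-x_1)n_1>(d-x_1)n_1$.

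\textbf{Main obstacle.} This last step is where I expect the difficulty. The two crude bounds above compare $(d-x_1)n_1$ with $x_2n_2$, whereas the target is $\sin(x_2)n_2$, which is smaller. So I will not split the right-hand side into two halves. Instead I will treat both terms jointly, using the sharper estimate
\[
z\le\cos x_2-\cos|x-p|
\]
inside a single expression. The margin $d$ in $x_2n_2\ge d+(d-x_1)n_1$ should then absorb the defect $x_2-\sin x_2$. If this fails near $|x_2|\to\pi$, I would instead use the $2\cos^2(x_2/2)$ bound for $z$ in the $n_1$-term as well, and exploit the fact that $\tan(x_2/2)$ and the factor $\cos^2(x_2/2)$ compensate there.
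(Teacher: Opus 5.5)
Your preliminary steps are correct and match the paper's: the sign analysis via $x_2n_2\ge x_1>0$, the bound $\tfrac12 z\tan\tfrac{x_2}{2}\,n_2\le\tfrac12\sin(x_2)\,n_2$ from $u\ge-1$, the case $n_1\le0$, and $\gamma\tan(\gamma x_1)\le1/(d-x_1)$. The gap is the case $n_1>0$, which you leave unproved, and your diagnosis of the obstacle is off.

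Splitting the right-hand side into halves is exactly what the paper does. What fails is your estimate $z\le\tfrac12(d-x_1)^2$: using $\sin\theta\le\theta$ discards the factor $\sin t/t$ (with $t=|x_2|$) that is needed to turn $t|n_2|$ into $|n_2|\sin t$. Combining your two bounds on $z$ does not repair this. Take $d=\tfrac12$, $x=(0.2,\pi-0.2)$ and $n=(0.95,\sqrt{0.0975})$. This $n$ is a supporting normal at the vertex $x$ of the convex hull of $\cl B(p,d)\cup\{x\}$, whose interior satisfies \eqref{eq:configuration}. There
\[
\min\Bigl\{\tfrac12(d-x_1)^2,\,1+\cos x_2\Bigr\}\Bigl(\gamma\tan(\gamma x_1)n_1+\tfrac12\tan\tfrac{x_2}{2}\,n_2\Bigr)\approx0.074>0.062\approx\sin(x_2)\,n_2 .
\]
So no argument that uses only those two bounds, whether split or joint, can succeed. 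The margin $d$ does not absorb the defect either, since $n_2(x_2-\sin x_2)\approx0.86>d$ in this example.

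The missing ingredient is a sharper bound on the barrier deficit that keeps the factor $\sin t/t$. Put $s=d-x_1$ and $\rho=\min\{\sqrt{t^2+s^2},\pi\}$. Then
\[
z\le\cos t-\cos\rho=\int_{t^2}^{\rho^2}\frac{\sin\sqrt\sigma}{2\sqrt\sigma}\dd\sigma\le\frac{\sin t}{2t}\,(\rho^2-t^2)\le\frac{s^2\sin t}{2t},
\]
because $\theta\mapsto\sin\theta/\theta$ is decreasing on $(0,\pi]$. The paper phrases this as concavity of $\sigma\mapsto-\cos\sqrt\sigma$ on $[0,\pi^2]$.

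The truncated case $t^2+s^2>\pi^2$ needs no separate treatment, since then $\rho^2-t^2=\pi^2-t^2\le s^2$.

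Combine this with $\gamma\tan(\gamma x_1)\le1/s$ and with $sn_1<t|n_2|$, which follows from \eqref{eq:normal}. This gives
\[
z\,\gamma\tan(\gamma x_1)\,n_1\le\frac{sn_1\sin t}{2t}<\tfrac12|n_2|\sin t .
\]
Adding your bound for the $n_2$-term then proves \eqref{eq:boundary}.
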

	
	\begin{proof}
		Lemma \ref{lem:supporting-geometry} gives $x_2n_2\ge x_1>0$ on $\Lambda$.
		Thus $x_2$ and $n_2$ have the same nonzero sign. Write
		\begin{equation}\label{eq:boundary-variables}
			t=|x_2|\in(0,\pi),\qquad m=|n_2|>0.
		\end{equation}
		Then $\sin(x_2)n_2=\sin t\,m>0$ and
		$\tan(x_2/2)n_2=\tan(t/2)m$. Since $u\ge-1$, we have
		$0\le z\le1+\cos t$. Consequently,
		\begin{equation}\label{eq:normal-t}
			z\,\frac12\tan\frac t2\,m
			\le(1+\cos t)\frac12\tan\frac t2\,m
			=\frac12\sin t\,m.
		\end{equation}
		If $n_1\le0$, the first term in \eqref{eq:boundary} is nonpositive.
		By \eqref{eq:normal-t}, the left-hand side of
		\eqref{eq:boundary} is at most $\tfrac12\sin t\,m<\sin t\,m$,
		proving \eqref{eq:boundary} in this case.
		
		We now assume $n_1>0$, set $s=d-x_1\in(0,d)$ and
		$\rho=\min\{\sqrt{t^2+s^2},\pi\}$. The barrier
		\eqref{eq:normalized-barrier} gives $u(x)\ge\cos\rho$. Since $t\le\rho\le\pi$, we have
		\begin{equation}\label{eq:first-deficit}
			0\le z\le\cos t-\cos\rho.
		\end{equation}
		For $F(\sigma)=-\cos\sqrt\sigma$ and $0<\sigma\le\pi^2$, differentiation gives
		\begin{equation}\label{eq:concavity-calculation}
			F'(\sigma)=\frac{\sin\sqrt\sigma}{2\sqrt\sigma},\qquad
			F''(\sigma)=
			\frac{\sqrt\sigma\cos\sqrt\sigma-\sin\sqrt\sigma}{4\sigma^{3/2}} \le 0.
		\end{equation}
		Here the last inequality follows from the elementary inequality $t\cos t\le \sin t$ for $0\le t\le\pi$.
		By continuity at $0$, $F$ is concave on $[0,\pi^2]$.
		The tangent line inequality for concave functions at $t^2$ yields
		\begin{equation}\label{eq:deficit}
			\begin{split}
				0\le z
				&\le F(\rho^2)-F(t^2)
				\le F'(t^2)(\rho^2-t^2)\\
				&\le\frac{s^2\sin t}{2t}.
			\end{split}
		\end{equation}
		The last inequality also covers the truncated case $t^2+s^2>\pi^2$, since
		then $\rho^2-t^2=\pi^2-t^2\le s^2$.
		
		The identity $\gamma d=\pi/2$ gives
		\begin{equation}\label{eq:cotangent-bound}
			\gamma\tan(\gamma x_1)=\gamma\cot(\gamma s)\le\frac1s.
		\end{equation}
		Here the last inequality follows from the elementary inequality $y\cot y\le1$ for $0<y<\pi/2$.
		Moreover, \eqref{eq:normal} gives $tm\ge d+sn_1$, so in particular
		$sn_1<tm$. Therefore \eqref{eq:deficit} and \eqref{eq:cotangent-bound} give
		\begin{equation}\label{eq:normal-s}
			z\gamma\tan(\gamma x_1)n_1
			\le\frac{s n_1\sin t}{2t}
			<\frac12\sin t\,m.
		\end{equation}
		Adding \eqref{eq:normal-t} and \eqref{eq:normal-s} proves the lemma.
	\end{proof}
	
	The following proposition is a key step in the proof of rigidity. It shows that the distance condition forces $u$ to attain its maximum along the entire segment joining $p$ to a nearest boundary point.
	
	\begin{proposition}\label{prop:comparison}
		In the configuration \eqref{eq:configuration}, if $0<d\le\pi/\sqrt3$, then
		\begin{equation}\label{eq:comparison}
			u(x)\ge\cos x_2\qquad(x\in\cl D).
		\end{equation}
		In particular, $u(x_1,0)=1$ for $0\le x_1\le d$.
	\end{proposition}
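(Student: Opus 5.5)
We set $v=\cos x_2$ and $z=(v-u)^+$ on $\cl D$, and aim to show $z\equiv0$. The comparison runs against the positive weight
\[
h(x)=\cos(\gamma x_1)\cos\frac{x_2}{2},\qquad \gamma=\frac{\pi}{2d},
\]
which is positive on $D$ and vanishes on $\{x_1=d\}$ and on $\{x_2=\pm\pi\}$. It satisfies $-\Delta h=(\gamma^2+\tfrac14)h$.

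First we check that $z$ vanishes on the pieces of $\partial D$ where no Neumann information is available. On $\Gamma$ with $x_1=d$ the barrier \eqref{eq:normalized-barrier} gives $u(d,x_2)\ge\cos(\min\{|x_2|,\pi\})=\cos x_2$, so $z=0$ there. On $x_2=\pm\pi$ we have $v=-1\le u$, so again $z=0$. On $\Lambda_0$ the set $\cl\Om\cap\{x_1=0\}$ lies on the supporting line and contributes with normal $(-1,0)$. Since $\partial_{x_1}h=0$ there (because $\sin(\gamma\cdot0)=0$) and $\partial_{x_1}v=0$, this piece produces no boundary term.

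The core is an integral identity. Note that $z$ is Lipschitz on $\cl D$ and that $-\Delta(v-u)=(v-u)$ weakly in $D$, since $-\Delta v=v$ and $-\Delta u=u$. We test the weak Neumann equation \eqref{eq:weak} for $u$ with $\varphi=zh$, extended by zero outside $S$. This extension is admissible, i.e.\ $\varphi\in H^1(\Om)$, because $zh$ vanishes on $\Gamma$. For $v$ we use Green's formula \eqref{eq:green} on $D$. Subtracting, we obtain
\[
\int_D\nabla z\cdot\nabla(zh)\,dx-\int_D z^2h\,dx=\int_{\Lambda}zh\,\partial_\nu v\,ds+\int_{\Lambda_0}(\cdots)\,ds,
\]
where the $\Lambda_0$ term vanishes since $\partial_{x_1}v=0$. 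Next we use the standard ground-state identity
\[
\int\nabla z\cdot\nabla(zh)=\int h\bigl|\nabla(z)-z\nabla h/h\bigr|^2+\ldots,
\]
or, more simply, $\int\nabla z\cdot\nabla(zh)=\int h|\nabla z|^2+\tfrac12\int\nabla(z^2)\cdot\nabla h$. Integrating the last term by parts gives $-\tfrac12\int z^2\Delta h+\tfrac12\int_{\partial D}z^2\partial_\nu h$.

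Rather than working with $z^2$, we follow the paper's hint and use the linear quantity $\int_D zh$. Test the equation for $v-u$ against $h$ on the set $\{z>0\}$, i.e.\ work with $\int\nabla z\cdot\nabla h$. Green's formula applied to $h$ gives
\[
\int_D\nabla z\cdot\nabla h=(\gamma^2+\tfrac14)\int_D zh+\int_{\partial D}z\,\partial_\nu h.
\]
Kato's inequality gives $-\Delta z\le z$ weakly, which yields
\[
\int_D\nabla z\cdot\nabla h\le\int_D zh+\int_\Lambda \chi_{\{z>0\}}h\,\partial_\nu v\,ds,
\]
using $\partial_\nu u=0$. Combining the two gives
\[
(\gamma^2-\tfrac34)\int_D zh\le\int_\Lambda\bigl(h\,\partial_\nu v\,\chi_{\{z>0\}}-z\,\partial_\nu h\bigr)ds.
\]
Computing the boundary derivatives, $\partial_\nu v=-\sin x_2\,\nu_2$ and $-\partial_\nu h=h\bigl(\gamma\tan(\gamma x_1)\nu_1+\tfrac12\tan\tfrac{x_2}{2}\nu_2\bigr)$. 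The integrand on $\Lambda$ is therefore
\[
h\Bigl[z\bigl(\gamma\tan(\gamma x_1)\nu_1+\tfrac12\tan\tfrac{x_2}{2}\nu_2\bigr)-\sin x_2\,\nu_2\Bigr]
\]
on $\{z>0\}$, which is negative by Lemma \ref{lem:boundary}, and it is zero elsewhere. Hence $(\gamma^2-\tfrac34)\int_D zh\le0$.

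If $d<\pi/\sqrt3$ then $\gamma^2>3/4$, so $z\equiv0$ in $D$, and by continuity on $\cl D$. At the critical value $d=\pi/\sqrt3$ the coefficient vanishes. The inequality then forces the $\Lambda$ integral to be zero, and since the integrand is strictly negative there, $z=0$ $\mathcal H^1$-a.e.\ on $\Lambda$. We expect this equality case to be the main obstacle. The plan is to revisit the Kato and Green steps with $z$ now vanishing on all of $\partial D$ except $\Lambda_0$, and to obtain
\[
\int_D|\nabla z|^2\le\int_D z^2
\]
with $z=0$ on the lateral sides $x_1=d$ and $x_2=\pm\pi$. We then compare with the sharp one-dimensional Poincar\'e inequalities. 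In $x_2$ on $(-\pi,\pi)$ with Dirichlet conditions the constant is $1/4$. In $x_1$ on $(0,d)$, with a Neumann condition at $x_1=0$ and a Dirichlet condition at $x_1=d$, the constant is $\gamma^2=3/4$. Their sum is exactly $1$, so equality would force $z$ to be a multiple of $h$ on a full rectangle. That is incompatible with $D\ne S$ or with $z=0$ on $\Lambda$ unless $z\equiv0$. Finally, $x_2=0$ gives $u(x_1,0)\ge1$, hence $u=1$ along the segment.
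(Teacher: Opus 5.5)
Your argument follows the paper's proof step for step. It uses the same weight $h$ and the boundary form of Kato's inequality, which the paper justifies with the admissible test functions $h\min\{1,z/\varepsilon\}$ (these vanish on $\Gamma$) and the limit $\varepsilon\to0$. It then applies Green's identity for $h$, Lemma~\ref{lem:boundary}, and the sharp one-dimensional Poincar\'e inequalities at the threshold. The case $d<\pi/\sqrt3$ is complete as you wrote it.

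The gap is in the last step of the endpoint case $d=\pi/\sqrt3$. The Poincar\'e equality only gives $z=ch$ on $S$ for some constant $c\ge0$. You exclude $c>0$ as ``incompatible with $D\ne S$ or with $z=0$ on $\Lambda$.'' That works only when $D\ne S$, because then $z=0$ on the nonempty open set $S\setminus\cl D$, where $h>0$.

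But $S\subset\Om$ is entirely possible, for example $\Om=(0,L)\times(-\pi,\pi)$ with $L\ge2d$ and $p=(d,0)$. Then $D=S$ and $\Lambda=\emptyset$, so both of your criteria are vacuous. The facts $z=0$ on $\Gamma$ and $u(p)=1$ do not help either, since $h$ vanishes on $\Gamma$ and $h(p)=\cos(\pi/2)=0$.

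You need one more property of $u$. The paper uses that $p$ is an interior maximum, so $\nabla u(p)=0$. From $z=ch$ one gets $u(x_1,0)=1-c\cos(\gamma x_1)$ for $0<x_1<d$. Hence $\partial_1u(p)=c\gamma\sin(\gamma d)=c\gamma$, which forces $c=0$. This handles $D=S$ and $D\ne S$ at once.

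Alternatively, in the case $D=S$, write $y=\cos(x_2/2)$. Then $u=2y^2-1-c\cos(\gamma x_1)\,y<-1$ for small $y>0$, i.e.\ for $x_2$ near $\pm\pi$ inside $S\subset\Om$, contradicting $u\ge-1$.

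With either addition, the conclusion $u(x_1,0)=1$ on $[0,d]$ follows as you state.
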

	
	\begin{proof}
		Define
		\begin{equation}\label{eq:comparison-functions}
			\upsilon =\cos x_2-u,\qquad z=\upsilon^+,\qquad
			h=\cos(\gamma x_1)\cos\frac{x_2}{2}.
		\end{equation}
		The functions $\upsilon,z$ are Lipschitz on $\cl D$, and $h$ is smooth. A direct calculation gives
		\begin{equation}\label{eq:h}
			-\Delta h=\left(\gamma^2+\frac14\right)h,
			\quad h>0\text{ in }S,\quad h|_\Gamma=0,
			\quad\partial_1h|_{\{x_1=0\}}=0.
		\end{equation}
		On $x_1=d$, we have $|x-p|=|x_2|\le\pi$, so the barrier \eqref{eq:normalized-barrier} gives
		$u(d,x_2)\ge\cos x_2$. On $x_2=\pm\pi$, we have $u\ge-1=\cos x_2$.
		Consequently,
		\begin{equation}\label{eq:z-artificial}
			\upsilon \le0,\qquad z=0\qquad\text{on }\Gamma.
		\end{equation}
		
		Let $\varphi\in\Lip(\cl D)$ vanish on $\Gamma$, and extend it
		by zero from $D$ to $\Om$. To see that the extension is Lipschitz,
		take $x\in D$ and $y\in\Om\setminus D$. Convexity keeps the segment
		$[x,y]$ in $\Om$. Its first exit point $z_0$ from $S$ lies on
		$\Gamma$: the segment cannot meet $x_1=0$ because $\Om\subset\{x_1>0\}$.
		Since $\varphi(z_0)=0$,
		$|\varphi(x)|\le\Lip(\varphi)|x-z_0|\le\Lip(\varphi)|x-y|$.
		Pairs lying on the same side of $\partial D$ satisfy the Lipschitz
		bound directly. Thus the extension belongs to $H^1(\Om)$, with
		weak gradient $\ind_D\nabla\varphi$ almost everywhere, and is an
		admissible test function in \eqref{eq:weak}. Hence
		\begin{equation}\label{eq:u-local-test}
			\int_D(\nabla u\cdot\nabla\varphi-u\varphi)\dd x=0.
		\end{equation}
		Apply \eqref{eq:green} to the smooth function $\cos x_2$, which solves
		$(\Delta+1)\cos x_2=0$ and has normal derivative $-\sin(x_2)\nu_2$.
		The $\Gamma$ term vanishes because $\varphi=0$, and the $\Lambda_0$ term
		vanishes because $\nu_2=0$ almost everywhere there. Subtracting \eqref{eq:u-local-test} gives
		\begin{equation}\label{eq:v-weak}
			\int_D(\nabla \upsilon \cdot\nabla\varphi-\upsilon \varphi)\dd x
			=-\int_\Lambda\varphi\sin(x_2)\nu_2\dd s.
		\end{equation}

		For $\varepsilon>0$, let
		\begin{equation}\label{eq:positive-tests}
			\chi_\varepsilon=\min\{1,z/\varepsilon\},\qquad
			\varphi_\varepsilon=h\chi_\varepsilon.
		\end{equation}
		These functions are Lipschitz on $\cl D$ and vanish on $\Gamma$, so they
		are legitimate test functions. The Lipschitz chain rule yields
		\begin{equation}\label{eq:positive-test-gradient}
			\nabla \upsilon\cdot\nabla\varphi_\varepsilon
			=\chi_\varepsilon\nabla \upsilon\cdot\nabla h
			+\frac h\varepsilon\ind_{\{0<\upsilon<\varepsilon\}}|\nabla \upsilon|^2
			\ge\chi_\varepsilon\nabla \upsilon\cdot\nabla h
		\end{equation}
		almost everywhere. Thus \eqref{eq:v-weak} implies
		\begin{equation}\label{eq:positive-prelimit}
			\int_D\chi_\varepsilon(\nabla \upsilon\cdot\nabla h- \upsilon h)\dd x
			\le-\int_\Lambda h\chi_\varepsilon\sin(x_2)\nu_2\dd s.
		\end{equation}
		As $\varepsilon\to 0$, $\chi_\varepsilon\to\ind_{\{\upsilon>0\}}$.
		The volume integrands are dominated by
		$|\nabla \upsilon||\nabla h|+|\upsilon h|\in L^1(D)$.
		On $\Lambda$, the absolute value of the boundary integrand is
		at most $h$, which is bounded and integrable on the boundary of the
		bounded convex domain $D$. Dominated convergence therefore applies
		to both integrals. Using the chain rule
		$\nabla z=\ind_{\{\upsilon>0\}}\nabla\upsilon$ almost everywhere,
		we obtain
		\begin{equation}\label{eq:positive-weak}
			\int_D(\nabla z\cdot\nabla h-zh)\dd x
			\le-\int_{\Lambda\cap\{\upsilon>0\}}h\sin(x_2)\nu_2\dd s.
		\end{equation}

		Another application of \eqref{eq:green}, now to $z$ and the smooth function
		$h$, yields
		\begin{equation}\label{eq:green-h}
			\int_D(\nabla z\cdot\nabla h-zh)\dd x
			=\left(\gamma^2-\frac34\right)\int_Dzh\dd x
			+\int_\Lambda z\partial_\nu h\dd s.
		\end{equation}
		Here the $\Gamma$ term vanishes by \eqref{eq:z-artificial}, and the $\Lambda_0$
		term vanishes because $\partial_\nu h=0$ almost everywhere
		there. On $\Lambda$, the identity
		\begin{equation}\label{eq:normal-h}
			\partial_\nu h=-h\left(\gamma\tan(\gamma x_1)\nu_1
			+\frac12\tan\frac{x_2}{2}\,\nu_2\right),
		\end{equation}
		holds almost everywhere.
		The products $h\gamma\tan(\gamma x_1)$ and
		$\tfrac12h\tan(x_2/2)$ equal $-\partial_1h$ and $-\partial_2h$,
		respectively. These derivatives are bounded on $\cl S$, so the
		boundary integrals are finite even as $x_1\to d$ or $x_2\to\pm\pi$.
		
		Combining \eqref{eq:positive-weak}, \eqref{eq:green-h}, and
		\eqref{eq:normal-h}, using $z=0$ where $\upsilon\le0$, and applying
		Lemma \ref{lem:boundary}, we obtain
		\begin{equation}\label{eq:comparison-final}
			\begin{split}
				\left(\gamma^2-\frac34\right)\int_Dzh\dd x
				&\le\int_{\Lambda\cap\{\upsilon>0\}}h
				\left[z\left(\gamma\tan(\gamma x_1)\nu_1
				+\frac12\tan\frac{x_2}{2}\,\nu_2\right)
				-\sin(x_2)\nu_2\right]\dd s\\
				&\le0.
			\end{split}
		\end{equation}
		If $d<\pi/\sqrt3$, then
		$\gamma^2-3/4>0$. Since $z\ge0$ and $h>0$ in $D$, it follows that
		$\int_Dzh=0$. Continuity then forces $z=0$ on $\cl D$, which proves \eqref{eq:comparison}.
		
		We now consider the endpoint $d=\pi/\sqrt3$.
		Here $\gamma^2=3/4$, so Lemma
		\ref{lem:boundary} and
		\eqref{eq:comparison-final} imply that
		$\{\upsilon>0\}\cap\Lambda$ has boundary measure zero.
		Hence $z=0$ almost everywhere on $\Lambda$.
		Extend $z$ by zero from $D$ to $S$, keeping the same notation.
		Since $z$ vanishes almost everywhere on the internal interface $\partial\Om\cap S$, its zero extension belongs to $H^1(S)$.
		It also vanishes on $x_1=d$ and $x_2=\pm\pi$.
		Taking $\varphi=z$ in \eqref{eq:v-weak} gives
		\[
		\int_S|\nabla z|^2\dd x=\int_D|\nabla z|^2\dd x
		=\int_Dz^2\dd x=\int_Sz^2\dd x.
		\]
		The one-dimensional Neumann--Dirichlet and Dirichlet Poincar\'e
		inequalities give
		\[
		\begin{aligned}
			\int_0^d|\partial_1z(x_1,x_2)|^2\dd x_1
			&\ge\gamma^2\int_0^d z(x_1,x_2)^2\dd x_1,\\
			\int_{-\pi}^{\pi}|\partial_2z(x_1,x_2)|^2\dd x_2
			&\ge\frac14\int_{-\pi}^{\pi}z(x_1,x_2)^2\dd x_2,
		\end{aligned}
		\]
		for almost every $x_2$ and $x_1$, respectively.
		Therefore,
		\[
		\int_S|\nabla z|^2\dd x
		\ge\left(\gamma^2+\frac14\right)\int_Sz^2\dd x
		=\int_Sz^2\dd x.
		\]   
		Equality in both one-dimensional inequalities forces
		$z=c\cos(\gamma x_1)\cos(x_2/2)$ for some $c\ge0$.
		If $c>0$, then $z(x_1,0)=c\cos(\gamma x_1)$ and hence
		$
		u(x_1,0)=1-c\cos(\gamma x_1).
		$
		Thus
		$\partial_1u(p)=c\gamma\sin(\gamma d)=c\gamma>0$.
		This contradicts $\partial_1u(p)=0$, since $p=(d,0)$ is an interior
		maximum point of $u$. Thus $c=0$, and $z=0$ on $\cl D$
		in the endpoint case as well.
		
		For $0\le x_1\le d$, we have $(x_1,0)\in\cl D$, so $u(x_1,0)\ge\cos0=1$.
		The normalization gives the reverse inequality.
	\end{proof}
	
	\section{Rectangular rigidity and the main results}
	\label{sec:rigidity}
	
	\subsection{From a maximum segment to a rectangle}
	
	\begin{lemma}\label{lem:segment-analytic}
		Let $u$ solve $(\Delta+1)u=0$ in a connected planar domain containing
		$\Sigma=(0,d)\times\{0\}$. Suppose that $u\le1$ in the domain and
		$u(t,0)=1$ for $0<t<d$. Then $u(x)=\cos x_2$ throughout the domain.
	\end{lemma}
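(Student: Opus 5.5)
The plan is to show that $u$ and $\cos x_2$ have the same Cauchy data on the segment $\Sigma$, and then to conclude by unique continuation for the Helmholtz equation.

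\emph{Cauchy data on $\Sigma$.} Since $u$ solves $(\Delta+1)u=0$, it is real analytic in the domain. Along $\Sigma$ we have $u(t,0)=1$, so every tangential derivative of $u$ vanishes there, in particular $\partial_1u(t,0)=0$. Each point of $\Sigma$ is an interior point of the domain at which $u$ attains its maximum value $1$, because $u\le1$ everywhere. Hence $\nabla u=0$ on $\Sigma$, and therefore $\partial_2u(t,0)=0$ for $0<t<d$. The model function $\cos x_2$ has exactly the same data: $\cos 0=1$ and $\partial_2\cos x_2|_{x_2=0}=-\sin0=0$.

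\emph{Local agreement.} Set $v=u-\cos x_2$. Then $v$ solves $(\Delta+1)v=0$, and $v=\partial_2v=0$ on $\Sigma$. The equation gives $\partial_2^2v=-\partial_1^2v-v$. Differentiating this repeatedly in $x_1$ and $x_2$, induction on the order of the $x_2$-derivative shows that every partial derivative $\partial_1^j\partial_2^kv$ vanishes on $\Sigma$. Since $v$ is real analytic, its Taylor series at a point of $\Sigma$ is identically zero. Thus $v\equiv0$ on a neighbourhood of that point. Equivalently, one could cite Cauchy--Kovalevskaya or Holmgren uniqueness for the noncharacteristic line $\Sigma$.

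\emph{Global agreement.} The domain is connected and $v$ is real analytic and vanishes on a nonempty open set. By the identity theorem for real-analytic functions, $v\equiv0$ throughout the domain, so $u=\cos x_2$ everywhere.

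There is no serious obstacle. The only point that needs care is justifying that $\partial_2u=0$ on $\Sigma$: this uses that the maximum of $u$ is attained at interior points, which holds because the open segment $\Sigma$ lies in the open domain. The rest is standard analytic continuation.
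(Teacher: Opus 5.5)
Your proposal is correct and follows the paper's proof essentially step for step. Both arguments use the interior maximum on $\Sigma$ to get vanishing Cauchy data for $u-\cos x_2$, then the recurrence $\partial_2^{b+2}w=-\partial_1^2\partial_2^b w-\partial_2^b w$ with induction to kill all derivatives on $\Sigma$, and finally analytic continuation on the connected domain.
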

	
	\begin{proof}
		Every point of $\Sigma$ is an interior maximum, so $\nabla u=0$ on
		$\Sigma$. Therefore $w=u-\cos x_2$ satisfies
		\begin{equation}\label{eq:segment-cauchy}
			(\Delta+1)w=0,\qquad w(t,0)=0,
			\qquad\partial_2w(t,0)=0\quad(0<t<d).
		\end{equation}
		The functions $w(\cdot,0)$ and $\partial_2w(\cdot,0)$ vanish identically
		on an open interval, so all of their tangential derivatives vanish there.
		For each nonnegative integer $b$, differentiate $\partial_1^2 w+\partial_2^2 w+w=0$
		$b$ times with respect to $x_2$.
		Since $w$ is smooth, the partial derivatives commute, and rearranging then gives
		\begin{equation}\label{eq:derivative-recurrence}
			\partial_2^{b+2}w=-\partial_1^2\partial_2^bw-\partial_2^bw.
		\end{equation}
		The two initial functions $w(\cdot,0)$ and $\partial_2w(\cdot,0)$
		vanish. If $\partial_2^bw(\cdot,0)$ vanishes identically, then its
		second tangential derivative also vanishes, and the recurrence gives
		$\partial_2^{b+2}w(\cdot,0)=0$. Induction separately over the even
		and odd integers proves this for every nonnegative integer $b$.
		Tangential differentiation then gives
		\begin{equation}\label{eq:all-jets}
			\partial_1^a\partial_2^bw(t,0)=0
			\qquad(a,b\ge0,\ 0<t<d).
		\end{equation}
		At any fixed point of $\Sigma$, the convergent analytic Taylor series of
		$w$ is therefore zero. Hence $w$ vanishes on an open neighborhood of that
		point. Analytic continuation on the connected domain gives $w\equiv0$.
	\end{proof}

	\begin{lemma}\label{lem:rectangle}
		Let $u$ be a weak Neumann eigenfunction on a bounded convex planar
		domain $\Om$. Assume \eqref{eq:configuration}, $\mu=1$,
		$\max_{\cl\Om}|u|=1$, and
		$u(t,0)=1$ for $0<t<d$. Then
		\begin{equation}\label{eq:rectangle}
			u(x)=\cos x_2,\qquad
			\Om=(0,\omega_0)\times(-k\pi,l\pi),
		\end{equation}
		where $k,l\ge1$ are integers and $\omega_0\ge2d$.
		In particular, $\mu_2(\Om)\le1/4$.
	\end{lemma}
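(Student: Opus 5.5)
Lemma~\ref{lem:segment-analytic} applies directly: $u$ is real analytic in the connected domain $\Om$, satisfies $u\le1$, and equals $1$ on $\Sigma=(0,d)\times\{0\}\subset B(p,d)\subset\Om$. Hence $u(x)=\cos x_2$ on $\Om$, and by continuity on $\cl\Om$. The remaining work is geometric. We must recover the shape of $\Om$ from the weak Neumann condition \eqref{eq:weak}, using only convexity and no boundary regularity.

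\textbf{Step 1 (constant width).} Let $I=(\alpha,\beta)$ be the projection of $\Om$ onto the $x_2$-axis, and let $\omega(x_2)$ be the length of the horizontal cross-section. The function $\omega$ is concave on $I$, hence continuous there. Test \eqref{eq:weak} with $\varphi=q(x_2)$ for arbitrary $q\in C^\infty(\R)$. By Fubini this gives
\[
\int_I\bigl(-\sin(x_2)q'(x_2)-\cos(x_2)q(x_2)\bigr)\omega(x_2)\dd x_2=0 .
\]
The bracket is $-\frac{d}{dx_2}\bigl(\sin(x_2)q(x_2)\bigr)$, so $\int_I (\sin x_2\, q)'\,\omega\dd x_2=0$ for all $q$. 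Every $\psi\in C_c^\infty(I)$ vanishing wherever $\sin$ vanishes can be written as $\sin x_2\,q$. Therefore $\omega$ has distributional derivative zero on each component of $I\setminus\pi\Z$, i.e.\ $\omega$ is locally constant there. Continuity of $\omega$ makes it a single constant $\omega_0$ on $I$.

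Next, take $q$ with $q(\beta)\ne0$ and $q\equiv0$ near $\alpha$. Integrating by parts gives boundary terms $\omega_0\sin(\beta)q(\beta)$ and $\omega_0\sin(\alpha)q(\alpha)$, and these must vanish. Hence $\sin\alpha=\sin\beta=0$, so $\alpha=-k\pi$ and $\beta=l\pi$. We get $k,l\ge1$ because $\Om\supset\cl\Om\cap S$ contains points with $|x_2|$ near $\pi$ along the lines $x_1\in(0,d)$. More precisely, the barrier and nonnegativity arguments show that the maximum segment sits strictly inside, and $B(p,d)$ together with $u=\cos x_2$ forces the cross-section near $0$ to be nondegenerate. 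I will check this via $\alpha<0<\beta$ and $\alpha,\beta\in\pi\Z$.

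\textbf{Step 2 (straight, vertical sides).} Write $\Om=\{(x_1,x_2): a(x_2)<x_1<a(x_2)+\omega_0,\ x_2\in I\}$ with $a$ convex and $a+\omega_0$ concave. Then $a$ is both convex and concave, so $a$ is affine, say $a(x_2)=a_0+a_1x_2$, and $\Om$ is a parallelogram with horizontal top and bottom. Now test with $\varphi=x_1q(x_2)$. The $x_1$-integral of $x_1$ over a cross-section is $\omega_0 a(x_2)+\omega_0^2/2$, and the same computation gives
\[
\int_I\bigl(\sin(x_2)q\bigr)'\bigl(a(x_2)+\omega_0/2\bigr)\dd x_2=0 .
\]
Choosing $q$ supported near $0$ with $\sin x_2\, q$ arbitrary there, the derivative of the affine function $a$ must vanish, so $a_1=0$. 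Since $0\in\partial\Om$ with $\Om\subset\{x_1>0\}$ and $(d,0)\in\Om$, we get $a_0=0$. Thus $\Om=(0,\omega_0)\times(-k\pi,l\pi)$.

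\textbf{Step 3 (width and $\mu_2$).} The disk $B(p,d)\subset\Om$ gives $2d\le\omega_0$. The vertical side has length $(k+l)\pi\ge2\pi$, and rectangle eigenvalues give $\mu_2(\Om)\le \pi^2/((k+l)\pi)^2\le1/4$.

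The main obstacle is Step~1: handling the nonsmooth boundary in the test-function computation, and carefully extracting $\alpha,\beta\in\pi\Z$, including the possible degenerate endpoints where $\omega$ could drop. Concavity plus continuity of $\omega$ up to $\cl I$ should suffice.
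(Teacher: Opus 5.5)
Your proof is correct and follows essentially the same route as the paper. You use analytic continuation via Lemma~\ref{lem:segment-analytic}, test with $q(x_2)$ to get constant width and endpoints in $\pi\Z$, argue convex-plus-concave to make the left side affine, and test with $x_1q(x_2)$ to remove the shear; localizing $q$ there instead of taking $q=\sin$ is equally fine. The only blemish is your justification of $k,l\ge1$: the aside invoking $\Om\supset\cl\Om\cap S$ and the barrier is unjustified at that stage and unnecessary, since $p=(d,0)\in\Om$ already gives $\alpha<0<\beta$, and together with $\alpha,\beta\in\pi\Z$ this yields $\alpha\le-\pi$ and $\beta\ge\pi$.
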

	
	\begin{proof}
		Lemma \ref{lem:segment-analytic} proves $u=\cos x_2$ in $\Om$.
		We now determine the domain using only the weak equation.
		The projection of $\Om$ onto the $x_2$-axis is an open bounded interval
		$(t_-,t_+)$ with $t_-<0<t_+$.
		For $t\in(t_-,t_+)$, convexity and openness imply
		\begin{equation}\label{eq:horizontal-slices}
			\{x_1:(x_1,t)\in\Om\}=(f(t),g(t))
			=(f(t),f(t)+\omega(t)).
		\end{equation}
		Fix $t_1,t_2\in(t_-,t_+)$ and $0<\theta<1$.
		Choose points in these two sections approaching their left endpoints.
		Convexity of $\Om$ keeps the segment joining the points inside $\Om$.
		Passing to the limit gives
		$f((1-\theta)t_1+\theta t_2)\le(1-\theta)f(t_1)+\theta f(t_2)$,
		so $f$ is convex. Applying the same argument to the right endpoints shows
		that $g$ is concave.
		Finite convex and concave functions are locally Lipschitz on open
		intervals. Thus $f$ and $g$ are locally Lipschitz, and in particular
		continuous on $(t_-,t_+)$. Both are bounded because $\Om$ is bounded.
		Thus the width $\omega=g-f$ is positive and continuous.

		Let $q$ be any Lipschitz function on $[t_-,t_+]$, and use
		$\varphi(x)=q(x_2)$ in \eqref{eq:weak}. Since
		$\nabla u=(0,-\sin x_2)$, Fubini's theorem gives
		\begin{equation}\label{eq:width}
			0=\int_{t_-}^{t_+}\bigl(q'(t)\sin t+q(t)\cos t\bigr)\omega(t)\dd t
			=\int_{t_-}^{t_+}(q\sin)'(t)\omega(t)\dd t.
		\end{equation}
		Let $I$ be a component of $(t_-,t_+)\setminus\pi\Z$.
		For any $F(t)\in C_c^\infty(I)$, choose $q(t)=F(t)/\sin(t)$ on $I$ and extend it by
		zero outside $I$. This is an admissible Lipschitz function because its
		support stays away from the zeros of $\sin(t)$. Equation \eqref{eq:width}
		then reads
		\begin{equation}\label{eq:width-distribution}
			\int_I F'(t)\omega(t)\dd t=0.
		\end{equation}
		Therefore $\omega'=0$ in distributions on $I$, so $\omega$ is constant
		there. The interval $(t_-,t_+)$ contains only finitely many multiples of
		$\pi$, and continuity of $\omega$ identifies the constants on adjacent
		components. Consequently, $\omega(t)\equiv\omega_0>0$ on the entire interval.
		
		With constant width, \eqref{eq:width} becomes
		\begin{equation}\label{eq:width-endpoints}
			0=\omega_0\bigl(q(t_+)\sin t_+-q(t_-)\sin t_-\bigr).
		\end{equation}
		Choose an affine $q$ with endpoint values $(q(t_-),q(t_+))=(0,1)$ and
		then one with endpoint values $(1,0)$. It follows separately that
		$\sin t_+=0$ and $\sin t_-=0$. Since $t_-<0<t_+$,
		\begin{equation}\label{eq:vertical-endpoints}
			t_-=-k\pi,\qquad t_+=l\pi,\qquad k,l\in\{1,2,\ldots\}.
		\end{equation}
		Both integers are positive because the maximum line $x_2=0$ crosses the
		interior.
		
		The function $f$ is convex, while $f+\omega_0$ is concave. Subtracting the
		constant shows that $f$ is also concave. Hence
		\begin{equation}\label{eq:affine-slice}
			f(t)=\alpha+\beta t.
		\end{equation}
		It remains to show that $\beta=0$.
		Take $\varphi(x)=x_1q(x_2)$ in \eqref{eq:weak}. This is admissible since
		$x_1$ is bounded on $\Om$ and $q$ is Lipschitz. After integrating first with respect to $x_1$ and using
		\begin{equation}\label{eq:first-slice-moment}
			\int_{\alpha+\beta t}^{\alpha+\beta t+\omega_0}x_1\dd x_1
			=\omega_0\left(\alpha+\beta t+\frac{\omega_0}{2}\right),
		\end{equation}
		we divide the identity by \(\omega_0\) and integrate by parts. The endpoint terms vanish because $\sin t_-=\sin t_+=0$, yielding
		\begin{equation}\label{eq:shear-test}
			0=\int_{t_-}^{t_+}(q\sin)'(t)
			\left(\alpha+\frac{\omega_0}{2}+\beta t\right)\dd t
			=-\beta\int_{t_-}^{t_+}q(t)\sin t\dd t.
		\end{equation}
		Choosing $q(t)=\sin t$ gives
		$\beta\int_{t_-}^{t_+}\sin^2t\dd t=0$, hence $\beta=0$.
		It follows that $\Om=(\alpha,\alpha+\omega_0)\times(t_-,t_+)$.
		The containment $\Om\subset\{x_1>0\}$ gives $\alpha\ge 0$,
		and the point $(0,0)\in\partial\Om$ gives $\alpha=0$.
		The disk $B((d,0),d)$ is contained in the rectangle, so $\omega_0\ge2d$.
		This proves \eqref{eq:rectangle}.
		Finally, set $H=t_+-t_-=(k+l)\pi$. The function
		$\cos(\pi(x_2-t_-)/H)$ has zero mean on the rectangle and a Rayleigh
		quotient $\pi^2/H^2$. Using it in \eqref{eq:rayleigh} gives
		\begin{equation}\label{eq:rectangle-spectral-bound}
			\mu_2(\Om)\le\frac{\pi^2}{H^2}=\frac1{(k+l)^2}\le\frac14.
		\end{equation}
	\end{proof}
	
	\subsection{Proofs of the main results}
	
	\begin{proof}[Proof of Theorem {\ref{thm:rigidity}}]
		Let $\sigma\in\{-1,1\}$ be chosen so that $\sigma u(p)=M$.
		On the dilated domain $\widehat\Om=\sqrt\mu\,\Om$, put
		\begin{equation}\label{eq:rigidity-scaling}
			\widehat u(y)=\frac{\sigma}{M}u(y/\sqrt\mu),
			\qquad\widehat p=\sqrt\mu\,p.
		\end{equation}
		A change of variables in \eqref{eq:weak} shows that $\widehat u$
		is a weak Neumann eigenfunction on $\widehat\Om$ with eigenvalue $1$.
		Moreover,
		\begin{equation}\label{eq:scaled-distance}
			\widehat u(\widehat p)=\max_{\cl{\widehat\Om}}|\widehat u|=1,
			\qquad\dist(\widehat p,\partial\widehat\Om)
			=\sqrt\mu\,\dist(p,\partial\Om).
		\end{equation}
		A rigid motion now gives \eqref{eq:configuration} with
		$d\le\pi/\sqrt3$. Proposition \ref{prop:comparison} provides the maximum
		segment, and Lemma \ref{lem:rectangle} provides the rectangle and
		$\widehat u=\cos y_2$ in these coordinates.
		Undoing the rigid motion makes the $y_2$-direction a unit vector $\tau$;
		undoing the dilation and sign normalization gives
		{\eqref{eq:rigid-function}}. The corresponding side length is
		$(k+l)\pi/\sqrt\mu$, with $m=k+l\ge2$.
		Finally, eigenvalue scaling gives
		\begin{equation}\label{eq:scaled-second-eigenvalue}
			\mu_2(\widehat\Om)=\frac{\mu_2(\Om)}\mu\le\frac14,
		\end{equation}
		which is {\eqref{eq:spectral-factor}}.
	\end{proof}
	\vspace{0.2em}
    
	\begin{proof}[Proof of Theorem {\ref{thm:main}}]
		We first combine the diameter bound of Kr\"oger \cite{Kroger} with the
		localization theorem of Rohleder \cite[Theorem 1.1]{Rohleder}.
		Write $D_\Om=\diam\Om$. With $j_{m,1}$ denoting the first positive
		zero of $J_m$, the diameter bound reads
		\begin{equation}\label{eq:diameter}
			\sqrt{\mu_2(\Om)}D_\Om\le2j_{0,1}.
		\end{equation}
		For any interior critical point $p$ of a second Neumann eigenfunction,
		Rohleder's theorem gives
		\begin{equation}\label{eq:rohleder}
			R_p:=\max_{y\in\partial\Om}|y-p|
			>\frac{j_{1,1}}{2j_{0,1}}D_\Om.
		\end{equation}
		Here, Neumann regularity on bounded convex domains \cite{Grisvard}
		implies \(u\in H^2(\Omega)\), so Rohleder's theorem applies. Neither boundary smoothness
		nor simplicity of $\mu_2(\Om)$ is needed.

		Put $d_p=\dist(p,\partial\Om)$ and choose $y\in\partial\Om$ with
		$|y-p|=R_p$. The ray from $y$ through $p$ meets the boundary again at
		a point $y'$ beyond $p$. Thus
		\begin{equation}\label{eq:diameter-chord}
			D_\Om\ge|y-y'|=R_p+|p-y'|\ge R_p+d_p.
		\end{equation}
		Combining \eqref{eq:diameter}, \eqref{eq:rohleder} and \eqref{eq:diameter-chord} yields
		\begin{equation}\label{eq:localization}
			\sqrt{\mu_2(\Om)}d_p
			<2j_{0,1}-j_{1,1}<\frac\pi{\sqrt3}.
		\end{equation}

		By Proposition \ref{prop:gradient}, $u$ is continuous on $\cl\Om$, so
		$M=\max_{\cl\Om}|u|>0$ is attained. Suppose it is attained at
		$p\in\Om$. Then $u(p)=M$ or $u(p)=-M$, and $\nabla u(p)=0$.
		Equation \eqref{eq:localization} allows us to apply
		Theorem {\ref{thm:rigidity}} with $\mu=\mu_2(\Om)$, giving
		\begin{equation}\label{eq:main-contradiction}
			0<\mu_2(\Om)\le\frac14\mu_2(\Om),
		\end{equation}
		a contradiction. Hence every point where $|u|=M$ lies on the boundary,
		and {\eqref{eq:main}} follows.
	\end{proof}
	\vspace{0.2em}
    
	\iffalse The following consequence of Theorem {\ref{thm:main}} is immediate.	
	\begin{corollary}\label{cor:equal-extrema}
		Let $u$ be a second Neumann eigenfunction on a bounded convex planar domain $\Om$.
		If $\max_{\cl\Om}u=-\min_{\cl\Om}u$, then its maximum and minimum are both
		attained only on $\partial\Om$. In particular, this holds whenever
		$u\circ T=-u$ for an isometry $T$ with $T(\Om)=\Om$, including
		odd reflection symmetry.
	\end{corollary}
	\fi

	\begin{proof} [Proof of Corollary \ref{cor:central}]
    We here give an alternative proof of the antisymmetry using Rohleder's localization theorem \cite{Rohleder}.
		After translating the domain, we may assume $c=0$. For any
		$x\in\Om$, central symmetry gives $-x\in\Om$, and convexity places
		their midpoint $0$ in $\Om$.
		Reflection through the origin preserves both the domain and the weak
		Neumann equation. Thus
		\begin{equation}\label{eq:even-component}
			g(x)=\frac{u(x)+u(-x)}2
		\end{equation}
		is either zero or a second eigenfunction. If it is nonzero, its evenness implies $\nabla g(0)=0$. Equation \eqref{eq:rohleder} then gives a point
		$y\in\partial\Om$ such that
		\begin{equation}\label{eq:central-radius-contradiction}
			|y|>\frac{j_{1,1}}{2j_{0,1}}\diam\Om.
		\end{equation}
		But $-y\in\partial\Om$, so $2|y|\le\diam\Om$. Since
		$j_{1,1}/(2j_{0,1})>1/2$, the two inequalities contradict each other.
		Therefore $g=0$ and $u$ is odd.
		For a nonzero odd function on this domain, then
		$\max_{\cl\Om}u=-\min_{\cl\Om}u=\max_{\cl\Om}|u|$.
		Theorem \ref{thm:main} shows that maximum and minimum of $u$ are attained only on $\partial\Om$.
	\end{proof}

    \vspace{0.2em}
	{\begin{proof}[Proof of Theorem \ref{thm:higher-maximum-modulus}]
			Let $0\ne u\in E_\mu(\Om)$ and set
			$M=\max_{\cl\Om}|u|$. If $|u(p)|=M$ at some $p\in\Om$, then
			\[
			\sqrt\mu\,\dist(p,\partial\Om)
			\le\sqrt\mu\,r_\Om\le\frac{\pi}{\sqrt3}.
			\]
			Theorem {\ref{thm:rigidity}} would force $\Om$ to be a rectangle,
			contrary to the hypothesis. Hence all points where $|u|=M$ lie on
			$\partial\Om$, proving \eqref{eq:higher-maximum-modulus}.
	\end{proof}}
	\vspace{0.2em}
	
	{\begin{proof}[Proof of Corollary \ref{cor:finite-higher-modes}]
			Kr\"oger's diameter estimate \cite[Theorem 1]{Kroger} gives
			\begin{equation}\label{eq:kroger-all-eigenvalues}
				\mu_k(\Om)D_\Om^2
				\le\bigl(2j_{0,1}+(k-2)\pi\bigr)^2,
				\qquad k\ge2.
			\end{equation}
			Consequently, for every $2\le k\le N+1$, the hypothesis
			\eqref{eq:diameter-inradius-criterion} yields
			\[
			\sqrt{\mu_k(\Om)}\,r_\Om
			\le\sqrt{\mu_{N+1}(\Om)}\,r_\Om
			\le\frac{r_\Om}{D_\Om}
			\bigl(2j_{0,1}+(N-1)\pi\bigr)
			\le\frac{\pi}{\sqrt3}.
			\]
			Since $\Om$ is not a rectangle,
			Theorem \ref{thm:higher-maximum-modulus} applies to every nonzero
			element of each eigenspace $E_{\mu_k(\Om)}(\Om)$, proving the assertion.
	\end{proof}}
	\vspace{0.2em}

	{\begin{proof}[Proof of  Theorem \ref{thm:triangle-third-mode}]
			Let
			$a\ge b\ge c>0$ be the side lengths of $T$, so $D=a$ and
			$P=a+b+c$, and set
			\[
			Q=a^2+b^2+c^2,\qquad r=r_T=\frac{2S}{P}.
			\]
			{The side lengths and inradius satisfy}
			\begin{equation}\label{eq:triangle-side-inradius}
				\frac{Q}{P^2}+2\frac{r^2}{D^2}\le\frac12.
			\end{equation}
			Indeed, put $s=P/2$, $x=s-a$, $y=s-b$, and $z=s-c$.
			{Since $a$ is the longest side, $x>0$, $x\le y$, and $x\le z$. Also,} $s=x+y+z$, $D=y+z$, and Heron's formula gives
			$r^2=xyz/s$. Hence
			\[
			\frac12-\frac{Q}{P^2}-2\frac{r^2}{D^2}
			=\frac{xD(y-z)^2+yz(D^2-4x^2)}{2s^2D^2}\ge0,
			\]
			since $D=y+z\ge2x$.
			
			The estimates of
			Laugesen and Siudeja \cite[Theorem 3.5]{LaugesenSiudejaMax} and
			\cite[Theorem 3.3]{LaugesenSiudejaMin} give
			\begin{equation}\label{eq:triangle-spectral-inputs}
				\mu_2(T)+\mu_3(T)\le\frac{2\pi^2Q}{9S^2},
				\qquad \mu_2(T)D^2>j_{1,1}^2.
			\end{equation}
			Here $j_{m,1}$ is the first positive zero of $J_m$.
			Let $t=r^2/D^2$. Multiplying the first inequality by
			$r^2=4S^2/P^2$ and using \eqref{eq:triangle-side-inradius}, we obtain
			\[
			(\mu_2(T)+\mu_3(T))r^2
			\le\frac{8\pi^2}{9}\frac{Q}{P^2}
			\le\frac{4\pi^2}{9}-\frac{16\pi^2}{9}t.
			\]
			Subtracting the lower bound for $\mu_2(T)r^2$ gives
			\begin{equation}\label{eq:triangle-decreasing-bound}
				\mu_3(T)r^2<\frac{4\pi^2}{9}
				-\left(\frac{16\pi^2}{9}+j_{1,1}^2\right)t.
			\end{equation}
			On the other hand, Kr\"oger's bound
			\eqref{eq:kroger-all-eigenvalues} with $k=3$ gives
			\[
			\mu_3(T)r^2\le(2j_{0,1}+\pi)^2t.
			\]
			Set $B=16\pi^2/9+j_{1,1}^2$ and $C=(2j_{0,1}+\pi)^2$.
			{The smaller of these bounds is largest at their intersection,
				$t=4\pi^2/(9(B+C))$. Hence}
			\begin{equation}\label{eq:triangle-uniform-bound}
				\mu_3(T)r^2
				\le\min\left\{Ct,\frac{4\pi^2}{9}-Bt\right\}
				\le\frac{4\pi^2C}{9(B+C)}
				<\frac{\pi^2}{3}.
			\end{equation}
			Since $T$ is not a rectangle, Theorem \ref{thm:higher-maximum-modulus} gives
			\eqref{eq:triangle-third-maximum-modulus} for every
			$0\ne u\in E_{\mu_3(T)}(T)$.

			To prove the vertex conclusion for acute triangles, set
			$M=\max_{\cl T}|u|$. By \eqref{eq:triangle-third-maximum-modulus},
			no interior point of $T$ attains $M$. Suppose that
			$p$ lies on an open side $e$ and satisfies $|u(p)|=M$.
			Reflect $T$ across $e$, writing $R_e$ for this reflection and
			\[
			K_e=\operatorname{int}\bigl(\cl T\cup R_e(\cl T)\bigr).
			\]
			Then $K_e$ is a convex quadrilateral that is not a rectangle.
			The even extension of $u$, denoted $U$, belongs
			to $H^1(K_e)$ and satisfies the Neumann equation with eigenvalue
			$\mu_3(T)$, though $\mu_3(T)$ need not remain the third eigenvalue of $K_e$.
			Moreover, $U$ attains its maximum modulus $M$ at the interior point $p\in K_e$.
			
			Let $E$ and $F$ be the two vertices of $e$, set $\ell=|EF|$, $\sigma=|E-p|$, and let $\alpha,\beta$ denote the
			angles of $T$ at $E$ and $F$, respectively. Then
			\[
			\dist(p,\partial K_e)
			=\min\{\sigma\sin\alpha,(\ell-\sigma)\sin\beta\}
			\le\frac{\ell\sin\alpha\sin\beta}{\sin\alpha+\sin\beta}
			=\frac{2S}{P-\ell}\le\frac{2S}{P-D}=\rho.
			\]
			Thus \eqref{eq:triangle-vertex-criterion} gives
			$\sqrt{\mu_3(T)}\,\dist(p,\partial K_e)\le\pi/\sqrt3$.
			Theorem {\ref{thm:rigidity}} would force $K_e$ to be a rectangle,
			a contradiction. Hence every maximum-modulus point is
			a vertex.
	\end{proof}}
	\vspace{0.2em}

	{\begin{proof}[Proof of Corollary \ref{thm:maximum-modulus-set}]
			Set $M=\max_{\cl\Om}|u|$ and $\mathcal M=\mathcal M_{\mathrm{int}}(u)$.
			In the rectangular case \eqref{eq:maximum-set-rectangular-mode},
			$\mathcal M$ contains the segments
			$(0,W)\times\{jL/m\}$, $1\le j\le m-1$, and is therefore infinite.
			Suppose henceforth that \eqref{eq:maximum-set-rectangular-mode} does not hold.

			If $p\in\mathcal M$ satisfied
			$\sqrt\mu\,\dist(p,\partial\Om)\le\pi/\sqrt3$,
			Theorem {\ref{thm:rigidity}} would make $\Om$ a rectangle and give
			$u(x_1,x_2)=\pm M\cos(\sqrt\mu(x_2-p_2))$ in coordinates with $x_2\in(0,L)$
			parallel to the oscillating direction. The Neumann condition at $x_2=0$
			gives $\sin(\sqrt\mu\,p_2)=0$. Hence $\sqrt\mu\,p_2=k\pi$ for some
			integer $k$, and
			$u(x_1,x_2)=\pm M\cos(\sqrt\mu\,x_2)$.
			The Neumann condition at $x_2=L$ also gives $\sin(\sqrt\mu L)=0$,
			so $L=m\pi/\sqrt\mu$ for a positive integer $m \ge 2$.
			Thus we obtain precisely
			\eqref{eq:maximum-set-rectangular-mode}, a contradiction.
			Thus \eqref{eq:maximum-set-distance-gap} holds. Let
			\[
			\mathcal M=\{x\in\cl\Om:|u(x)|=M,  \dist(x,\partial\Om)\ge \pi/\sqrt{3\mu} \}.
			\]
			By continuity, $\mathcal M$ is compact and contained in $\Om$.

			Suppose that $\mathcal M$ is infinite, and let $\mathcal A$ be its
			nonempty compact set of accumulation points. Fix $p\in\mathcal A$.
			Replacing $u$ by $-u$ if necessary, we may assume $u(p)=M$ and $u>0$ near $p$.
			In this neighborhood, $\mathcal M$ is the set where $u=M$. At $p$,
			\[
			\nabla u(p)=0,\qquad \operatorname{tr}D^2u(p)=\Delta u(p)=-\mu M<0.
			\]
			After a rotation, $\partial_{22}u(p)<0$. The analytic implicit function
			theorem gives a neighborhood $U$ of $p$ and an analytic function
			$g:I\to\R$, with $p_1\in I$, such that
			\[
			\{x\in U:\partial_2u(x)=0\}=\{(x_1,g(x_1)):x_1\in I\}.
			\]
			Every point of $\mathcal M\cap U$ lies on this graph. Since these points
			accumulate at $p$, the analytic function $x_1\mapsto u(x_1,g(x_1))-M$ has
			zeros accumulating at $p_1$ and hence vanishes identically on $I$.
			Consequently,
			\[
			\mathcal M\cap U=\mathcal A\cap U=\{(x_1,g(x_1)):x_1\in I\}.
			\]
			Here every point of the graph is itself an accumulation point of
			$\mathcal M$. Thus $\mathcal A$ is a compact embedded analytic
			one-dimensional manifold without boundary. Let $\Gamma$ be a connected component of $\mathcal A$. Then $\Gamma$ is a simple closed analytic curve.

			Let $G$ be the bounded domain enclosed by $\Gamma$.
			On $\Gamma$ we have $u^2=M^2$ and $\nabla u=0$.
			Consider the vector field
			\[
			X=(x\cdot\nabla u)\nabla u-\frac12|\nabla u|^2x+\frac\mu2u^2x,
			\]
			which satisfies the planar Rellich--Pohozaev identity
			\begin{equation}\label{eq:maximum-set-rellich-divergence}
				\diver X=(x\cdot\nabla u)(\Delta u+\mu u)+\mu u^2=\mu u^2.
			\end{equation}
			Since $X=(\mu M^2/2)x$ on $\Gamma$, the divergence theorem gives
			\begin{equation}\label{eq:maximum-set-rellich-integral}
				\mu\int_Gu^2\dd x
				=\int_\Gamma X\cdot\nu_G\dd s
				=\frac{\mu M^2}{2}\int_\Gamma x\cdot\nu_G\dd s
				=\mu M^2|G|,
			\end{equation}
			where $\nu_G$ is the outward unit normal to $G$.
			Since $u^2\le M^2$, this identity together with continuity forces $u^2=M^2$ on $G$.
			Continuity then implies $u=M$ on $G$,
			which yields a contradiction.
			Hence $\mathcal M$ is finite.
	\end{proof}}

    \vspace{0.2em}
	\begin{remark}\label{rem:sharp-factor}
		The factor $4$ in {\eqref{eq:spectral-factor}} is optimal. On
		$\Om_L=(0,L)\times(-\pi,\pi)$, with $0<L\le2\pi$, the function
		$u(x)=\cos x_2$ is a Neumann eigenfunction with eigenvalue $1$, whereas
		$\mu_2(\Om_L)=\min\{\pi^2/L^2,1/4\}=1/4$.
		Its maximum is attained at every point of $(0,L)\times\{0\}$, including
		points arbitrarily close to the boundary. This establishes sharpness
		of the spectral factor, but not of the distance threshold $\pi/\sqrt3$.
		
		The unit disk gives an upper bound on any possible universal distance
		threshold for this rectangular rigidity. The radial function
		$u(x)=J_0(j_{1,1}|x|)$ is a Neumann eigenfunction with eigenvalue
		$j_{1,1}^2$.
		The integral representation of $J_0$ gives $|J_0(r)|<1$ for $r>0$, while $J_0(0)=1$ \cite{DLMF}.
		Thus its maximum modulus is attained at the center, where
		$\sqrt\mu\,\dist(0,\partial\Om)=j_{1,1}$, although the disk is not
		a rectangle. Consequently, a universal constant replacing
		$\pi/\sqrt3$ in the hypothesis \eqref{eq:threshold} must be strictly less than $j_{1,1}$. These examples do not determine the optimal threshold.
	\end{remark}
	
	\iffalse	
	\begin{remark}\label{rem:one-sided-extrema}{
			Strict boundary maximum modulus does not exclude interior attainment
			of the extremum with smaller absolute value, even arbitrarily close to
			the boundary of a nonrectangular convex domain. Consider the half-disk
			\[
			\Om=\{(x,y)\in\R^2:x^2+y^2<1,\ y>0\},
			\qquad u(x,y)=J_0\bigl(j_{1,2}\sqrt{x^2+y^2}\bigr),
			\]
			where $j_{1,2}$ is the second positive zero of $J_1$.
			The function $u$ is a Neumann eigenfunction corresponding to the tenth eigenvalue
			$\mu=\mu_{10}(\Om)=j_{1,2}^2$. Then
			\[
			|u(x,y)|<1=u(0,0)=\max_{\partial\Om}|u|
			\qquad((x,y)\in\Om).
			\]
			On the other hand, $J_0$ decreases on $(0,j_{1,1})$ and increases
			on $(j_{1,1},j_{1,2})$ \cite{DLMF}. Thus
			\[
			\min_{\cl\Om}u=J_0(j_{1,1})\in(-1,0),
			\]
			and attained at every point of the interior semicircle
			$\{(x,y):x^2+y^2=j_{1,1}^2/j_{1,2}^2,\ y>0\}$.
			Hence no positive universal distance threshold can yield the rigidity
			conclusion of Theorem {\ref{thm:rigidity}} from smaller absolute value side.
		}
	\end{remark}
	\fi

	\vspace{0.1em}
	\begin{example} \label{ex:sector-smaller-extremum}
		The inradius condition \eqref{eq:inradius-criterion} does not exclude
		interior attainment of the extremum with smaller absolute value.
		To see this, consider the convex unit sector
		of opening angle $\pi/6$:
		\[
		\Om=\{(r\cos\theta,r\sin\theta):0<r<1,\ 0<\theta<\pi/6\},
		\qquad u(r,\theta)=J_0(j_{1,2}r).
		\]
		The Bessel equation gives $-\Delta u=j_{1,2}^2u$.
		Since $u$ is radial, its normal derivative vanishes on both straight
		sides; on the circular arc $r=1$, the identity $J_0'=-J_1$ gives
		$\partial_r u=-j_{1,2}J_1(j_{1,2})=0$. Thus $u$ is a Neumann
		eigenfunction with eigenvalue $\mu=\mu_3(\Om)=j_{1,2}^2$.
		In fact, the first three Neumann eigenvalues are
		\[
		\begin{aligned}
			\mu_1(\Om)=0,\quad
			\mu_2(\Om)=j_{1,1}^2,\quad
			\mu_3(\Om)=j_{1,2}^2.
		\end{aligned}
		\]

		The incenter lies on the angle bisector. If its distance from the
		vertex is $a$, tangency to the straight sides and the unit circle gives
		$r_\Om=a\sin(\pi/12)=1-a$. Consequently,
		\[
		r_\Om=\frac{\sin(\pi/12)}{1+\sin(\pi/12)},\qquad
		\mu r_\Om^2
		=j_{1,2}^2\left(\frac{\sin(\pi/12)}{1+\sin(\pi/12)}\right)^2
		<\frac{\pi^2}{3}.
		\]
		The integral representation of $J_0$ gives $|J_0(t)|<1$ for $t>0$.
		Hence
		\[
		|u(x)|<1=u(0)=\max_{\partial\Om}|u|
		\qquad(x\in\Om),
		\]
		and the maximum modulus on $\cl\Om$ is attained only at the boundary
		vertex $0$. On the other hand, $J_0$ decreases on $(0,j_{1,1})$
		and increases on $(j_{1,1},j_{1,2})$. Thus
		\[
		\min_{\cl\Om}u=J_0(j_{1,1})\approx-0.4,
		\qquad
		\left|\min_{\cl\Om}u\right|<1=\max_{\cl\Om}u.
		\]
		This extremum with smaller absolute value is attained at every interior point
		of the circular arc
		\[
		\left\{
		\frac{j_{1,1}}{j_{1,2}}(\cos\theta,\sin\theta):
		0<\theta<\pi/6
		\right\},
		\qquad \frac{j_{1,1}}{j_{1,2}}\approx0.55.
		\]
		It is also attained at the two boundary endpoints of this arc.
		Therefore strict boundary
		maximum modulus does not exclude interior points attaining the
		extremum with smaller absolute value.
	\end{example}

\end{document}